\newtheorem{theorem}{Theorem}[section]
\newtheorem{lemma}[theorem]{Lemma}
\theoremstyle{definition}
\newtheorem{problem}{Problem}
\numberwithin{equation}{section}
\begin{document}
\title{Multiplication operators on $L^{p}$}
\author{March T.~Boedihardjo}
\address{Department of Mathematics, University of California, Los Angeles, CA 90095-1555}
\email{march@math.ucla.edu}
\begin{abstract}
We show that every operator on $L^{p}$, $1<p<\infty$ defined by multiplication by the identity function on $\mathbb{C}$ is a compact perturbation of an operator that is diagonal with respect to an unconditional basis. We also classify these operators up to similarity modulo compact operators and up to approximate similarity.
\end{abstract}
\maketitle
\section{Introduction}
Let $\mu_{1},\mu_{2}$ be measures on $\mathbb{C}$ with compact supports. Consider the operator $M_{\mu_{i}}$ on $L^{2}(\mu_{i})$ defined by
\begin{equation}\label{multiplication}
(M_{\mu_{i}}f)(z)=zf(z),\quad z\in\mathbb{C},\,f\in L^{2}(\mu_{i}),\,i=1,2.
\end{equation}
It is well known \cite[Theorem 4.58]{Douglas} that $M_{\mu_{1}}$ and $M_{\mu_{2}}$ are unitarily equivalent if and only if $\mu_{1}$ and $\mu_{2}$ are equivalent (i.e., absolutely continuous with respect to each other).

The situation is quite different if we allow a compact perturbation. A classical result of Berg \cite{Berg} states that every normal operator on a separable Hilbert space is the sum of a diagonal operator and a compact operator. As a consequence, $M_{\mu_{1}}$ is unitarily equivalent to a compact perturbation of $M_{\mu_{2}}$ if and only if $\mathrm{supp}(\mu_{1})$ and $\mathrm{supp}(\mu_{2})$ have the same cluster points \cite[Corollary 2.13]{Pearcy}.

In this paper, we consider the question of whether these results extend to $L^{p}$ for $1<p<\infty$. If $\mu$ is a measure on $\mathbb{C}$ with compact support, we will use the same notation $M_{\mu}$ for the operator on $L^{p}(\mu)$ defined as in (\ref{multiplication}). Our first main result is
\begin{theorem}\label{main1}
Let $1<p<\infty$. Let $\mu$ be a finite measure on $\mathbb{C}$ with compact support. Then for every $\epsilon>0$, there exist a compact operator $K$ on $L^{p}(\mu)$ with $\|K\|<\epsilon$ and an operator $D$ on $L^{p}(\mu)$ that is diagonal with respect to a unconditional basis $(u_{n})_{n\geq 1}$ for $L^{p}(\mu)$ such that
\[M_{\mu}=D+K,\]
and the unconditional basis constant of $(u_{n})_{n\geq 1}$ is at most $C(p)$ where $C(p)>0$ depends only on $p$.
\end{theorem}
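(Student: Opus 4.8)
The plan is to diagonalize $M_{\mu}$ in a generalized Haar basis adapted to a nested sequence of partitions of the support, reading off the diagonal from the scalar approximation of $z$ at each scale and absorbing the scale-by-scale error into a compact remainder. First I would fix a closed square $Q\supseteq\mathrm{supp}(\mu)$ and build nested finite Borel partitions $\mathcal P_0\prec\mathcal P_1\prec\cdots$ of $Q$, obtained by repeatedly splitting each cell into two pieces of comparable $\mu$-measure, arranged so that already $\mathcal P_0$ has mesh $<\delta$ while $\mathrm{mesh}(\mathcal P_n)\to0$ (where $\mu$ has an atom the corresponding branch terminates). Let $\mathbb E_n$ be the conditional expectation onto $\mathcal P_n$-measurable functions; these are contractions on $L^{p}(\mu)$ with $\mathbb E_n\to I$ strongly. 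To each cell $C$ with children $C',C''$ I attach one normalized Haar function $h_C$, supported on $C$, constant on each child, with $\int h_C\,d\mu=0$; together with the indicators of the $\mathcal P_0$-cells these form the system $(u_n)$. That $(u_n)$ is an unconditional basis of $L^{p}(\mu)$ with constant at most $C(p)=\max(p-1,(p-1)^{-1})$ is precisely Burkholder's martingale-transform inequality, since flipping signs on the $h_C$ is a martingale transform, and density together with partial-sum convergence follows from $L^{p}$ martingale convergence.

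Next I would isolate the diagonal and the remainder. Since $M_{\mu}$ commutes with multiplication by each $\chi_{\Delta_j}$, $\Delta_j\in\mathcal P_0$, and the $\Delta_j$-decomposition of $L^{p}(\mu)$ is $1$-unconditional, it suffices to work inside one top cell $\Delta$ of diameter $<\delta$ and then take a direct sum, using $\|\bigoplus_j K_j\|=\sup_j\|K_j\|$. Inside $\Delta$ I set $D$ to be the Haar-diagonal compression of $M_{\mu}$, namely $D u_n=d_n u_n$ with $d_n$ the local value of $z$ on the support of $u_n$. Writing $z h_C=c_C h_C+(z-c_C)h_C$ for a point $c_C\in C$, the remainder $K=M_{\mu}-D$ then has an explicit matrix in $(u_n)$: the entry pairing $u_m$ with $u_n$ vanishes unless the supporting cells are nested, and otherwise is controlled by integrals $\int_C(z-c_C)h_C\,d\mu$, which by H\"older are at most $\mathrm{diam}(C)\,\mu(C)^{1/p'}$, hence $O(\delta)$ with geometric decay as one moves along the tree.

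These entrywise bounds make the Hilbert--Schmidt norm of $K$ on $L^{2}(\mu)$ finite and $O(\delta)$ by a routine summation over the tree, so on $L^{2}$ the remainder is compact with small norm. The genuine difficulty is $L^{p}$, where there is no Hilbert--Schmidt yardstick: I would (i) bound $K$ on every $L^{r}(\mu)$, $1<r<\infty$, by presenting it as a finite sum of ascending and descending triangular pieces, each a bounded martingale-type operator handled through Burkholder and a Schur-type test on the geometrically decaying bands, and (ii) upgrade the $L^{2}$-compactness to $L^{p}$-compactness by interpolation of compact operators between $L^{2}$ and a nearby $L^{r}$. The small-norm claim follows by scaling: normalizing $\Delta$ to unit diameter replaces $z$ by a function of modulus $\le1$, so the $L^{p}$-norm of the remainder on $\Delta$ is at most $C(p)\,\mathrm{diam}(\Delta)\le C(p)\delta$, and taking the direct sum over the finitely many top cells preserves both compactness and the estimate $\|K\|\le C(p)\delta$; choosing $\delta<\epsilon/C(p)$ then finishes the argument. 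I expect steps (i)--(ii), obtaining compactness together with a dimension-free $O(\delta)$ norm bound for $K$ in $L^{p}$ rather than merely in $L^{2}$, to be the main technical obstacle, since this is exactly where the Hilbert-space reasoning breaks down and the unconditional-basis (Burkholder) machinery must carry the load.
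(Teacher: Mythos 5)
Your construction is essentially the paper's: a generalized Haar system on a dyadic tree of cells with shrinking diameters (Lemmas \ref{unconditional} and \ref{square}), the basis started from a partition of mesh $<\delta$, and $D$ obtained by sampling $z$ on the supporting cell. The gap is in your error analysis, and it is not a technicality: your claim that $K=M_{\mu}-D$ is Hilbert--Schmidt on $L^{2}(\mu)$ with norm $O(\delta)$ is false for genuinely planar measures. The Haar system is orthonormal in $L^{2}$, and $K$ sends the $L^{2}$-normalized Haar function of a cell $C$ to a function whose norm is comparable to $\mathrm{diam}(C)$ (for, say, planar Lebesgue measure), so
\[\|K\|_{HS}^{2}\simeq\sum_{n}\sum_{C\in\mathcal{P}_{n}}\mathrm{diam}(C)^{2}\simeq\sum_{n}2^{n}\cdot 2^{-n}=\infty,\]
because in the plane halving the diameter requires four children, not two: level $n$ has $2^{n}$ cells of diameter only about $2^{-n/2}$. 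The geometric decay of individual entries, which you correctly observe, is exactly cancelled by the exponential growth of the number of cells. This failure is not an artifact of the estimate: it is precisely why the paper's $1$-summing refinement (Theorem \ref{main1abs}) is stated only for measures supported on $\mathbb{R}$, where diameters decay like $2^{-n}$ and the sum converges, and why the planar case is posed as an open problem (Problem 2), the known $p=2$ answer of Voiculescu resting on a construction other than the Haar tree. Consequently your compactness chain ``HS on $L^{2}$ $\Rightarrow$ compact on $L^{2}$ $\Rightarrow$ interpolate to $L^{p}$'' collapses at its first link. (A lesser issue: you require splits of comparable $\mu$-measure \emph{and} mesh tending to $0$; the comparability is never used---Burkholder needs only the martingale structure---and the geometric splitting that guarantees small mesh gives no measure control, so you should simply drop it.)

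What you are missing is that no $L^{2}$ detour, Schur test, or compactness interpolation is needed; there is a two-line estimate, and it is the heart of the paper's proof. Within a fixed level $n$, the Haar functions $h_{n,j}$ have pairwise disjoint supports $A_{n-1,j}$, and $(M_{\mu}-D)h_{n,j}=(z-\lambda_{n-1,j})h_{n,j}$ lives on the same cell; hence for $x$ in the span of level $n$, disjointness of supports gives $\|(M_{\mu}-D)x\|\leq\max_{j}\mathrm{diam}(A_{n-1,j})\|x\|\leq C\,2^{-n/2}\|x\|$, a bound that never sees how many cells the level contains. Since the level projections $P_{n}$ are uniformly bounded by unconditionality, $\|(M_{\mu}-D)P_{n}\|\leq C(p)\,2^{-(n-1)/2+1}$; each $(M_{\mu}-D)P_{n}$ has finite rank, and the series $\sum_{n}(M_{\mu}-D)P_{n}$ converges in operator norm, so $K$ is compact with $\|K\|\leq C(p)\sum_{n\geq N}2^{-n/2}$, small for large $N$---all of this directly in $L^{p}$. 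Your scaling argument for the norm bound is essentially sound (it amounts to the fact that a diagonal operator with entries of modulus at most $\delta$ with respect to a $C(p)$-unconditional basis has norm at most $C(p)\delta$), but the compactness must come from the level-by-level estimate, not from Hilbert--Schmidt considerations, which genuinely fail in the plane. (If you insist on your route, the same level estimate proves $K$ compact on $L^{2}$, and Krasnoselskii's interpolation theorem then transfers compactness to $L^{p}$; but at that point the detour buys nothing.)
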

Note that the unconditional basis $(u_{n})_{n\geq 1}$ depends on $\epsilon$.

Let $X_{1}$ and $X_{2}$ be Banach spaces. An operator $T_{1}$ on $X_{1}$ and an operator $T_{2}$ on $X_{2}$ are {\it similar} if there is an isomorphism $W:X_{1}\to X_{2}$ such that $T_{2}=WT_{1}W^{-1}$.

Unlike for Hilbert space, not all unconditional bases for $L^{p}(\mu)$, $1<p<\infty$ are equivalent. Thus Theorem \ref{main1} does not give similarity modulo compact operators of different $M_{\mu}$. In fact the characterization of similarity of $M_{\mu}$ modulo compact operators for $L^{p}$ is quite different from $L^{2}$. Our second main result is
\begin{theorem}\label{main2}
Suppose that $1<p<\infty$, $p\neq 2$. Let $\mu_{1}$ and $\mu_{2}$ be finite measures on $\mathbb{C}$ with compact supports. Assume that the support of $\mu_{1}$ and the support of $\mu_{2}$ are not finite sets. Let $\mu_{1}'$ and $\mu_{2}'$ be the purely nonatomic parts of $\mu_{1}$ and $\mu_{2}$, respectively. Then the following statements are equivalent.
\begin{enumerate}[(i)]
\item There exists a compact operator $K$ on $L^{p}(\mu_{2})$ such that $M_{\mu_{1}}$ is similar to $M_{\mu_{2}}+K$.
\item $\mu_{1}'$ and $\mu_{2}'$ are equivalent and the sets $\mathrm{supp}(\mu_{1})$ and $\mathrm{supp}(\mu_{2})$ have the same cluster points.
\end{enumerate}
\end{theorem}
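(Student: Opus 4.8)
The plan is to reduce both operators, modulo compact perturbation and similarity (written $\simeq$ below), to a canonical model depending only on the pair $(E,[\mu'])$, where $E$ is the common set of cluster points and $[\mu']$ the equivalence class of the nonatomic part, and then to argue that this model is a complete invariant. Write $\mu_i=\mu_i'+\mu_i''$ with $\mu_i'$ nonatomic and $\mu_i''$ purely atomic, so that $M_{\mu_i}=M_{\mu_i'}\oplus M_{\mu_i''}$ on $L^p(\mu_i')\oplus L^p(\mu_i'')$, the second summand being a diagonal operator on $\ell^p$ listing the atoms. The cluster-point condition is the essential-spectrum invariant: $M_\mu-\lambda=M_{z-\lambda}$ is Fredholm exactly when $\lambda$ is not a cluster point of $\mathrm{supp}(\mu)$ (at an isolated atom the kernel is finite dimensional and $M_{z-\lambda}$ is bounded below elsewhere, while at a cluster point one builds infinitely many almost-disjoint approximate null vectors), so $\sigma_{\mathrm{ess}}(M_\mu)$ is the set of cluster points of $\mathrm{supp}(\mu)$. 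As similarity and compact perturbation preserve $\sigma_{\mathrm{ess}}$, statement (i) already forces the two supports to share their cluster points; this is the easy half of (i)$\Rightarrow$(ii) and supplies the datum $E$ for the model.

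For (ii)$\Rightarrow$(i) I would proceed in three steps. First, if $\mu_1'\sim\mu_2'$ then multiplication by $(d\mu_1'/d\mu_2')^{1/p}$ is an isometric isomorphism $L^p(\mu_2')\to L^p(\mu_1')$ intertwining $M_{\mu_1'}$ and $M_{\mu_2'}$, so the nonatomic parts need no perturbation. Second, an $\ell^p$ analogue of the Weyl--von Neumann--Berg theorem (conjugate by a diagonal isomorphism and rearrange) shows that any two diagonal operators on $\ell^p$ with the same essential spectrum are similar modulo compact; hence $M_{\mu_i''}\simeq D_{\Lambda_i}$, a fixed diagonal model with $\sigma_{\mathrm{ess}}=\Lambda_i$, the set of cluster points of the atoms of $\mu_i$. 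The sets $\Lambda_1,\Lambda_2$ may differ, but only inside $F:=\mathrm{supp}(\mu')$, since every cluster point not coming from atoms lies in the perfect set $F\subseteq E$. The third, decisive step is an \emph{absorption lemma}: for any closed $E'\subseteq F$ and any diagonal $D$ on $\ell^p$ with $\sigma_{\mathrm{ess}}(D)=E'$ one has $M_{\mu'}\oplus D\simeq M_{\mu'}$. Granting it, both $M_{\mu_1'}\oplus D_{\Lambda_1}$ and $M_{\mu_2'}\oplus D_{\Lambda_2}$ reduce to the single model $M_{\mu'}\oplus D_E$, proving (i).

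I expect the absorption lemma to be the first real obstacle, and would prove it by carving a copy of $\ell^p$ out of the continuum. After a compact perturbation assume the diagonal entries $d_n$ of $D$ lie in $F$. Nonatomicity lets me choose pairwise disjoint $S_n\subseteq F$ with $d_n\in\overline{S_n}$, $\mu'(S_n)>0$, and $\mathrm{diam}(S_n)\to 0$; the normalized indicators $w_n$ span an isometric $V\cong\ell^p$ that is complemented by the conditional-expectation projection, whose norm is controlled as in Theorem \ref{main1}. In the block form of $M_{\mu'}$ relative to $V$ and its complement $Y$, the estimates $\|M_{\mu'}w_n-d_nw_n\|\le\mathrm{diam}(S_n)\to 0$ together with the disjointness of supports make every off-diagonal block and the diagonal error on $V$ compact, so $M_{\mu'}\simeq D\oplus(P_YM_{\mu'}|_Y)$. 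Choosing the $S_n$ so sparse that $\mu'$ restricted to $F\setminus\bigcup_nS_n$ remains equivalent to $\mu'$ then identifies $P_YM_{\mu'}|_Y$, after the isometry of step one and a further application of the same carving, with $M_{\mu'}$ itself. The delicate point is to keep the complement's measure class equal to $[\mu']$ while still extracting a full $\ell^p$; this is exactly where nonatomicity is indispensable.

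The genuinely hard direction is the necessity of $\mu_1'\sim\mu_2'$, which I expect to be the main obstacle of the whole argument, since here the hypothesis $p\neq 2$ must enter decisively: in $L^2$ the statement is false, as a compact perturbation erases the measure class. Passing to the Calkin algebra, (i) says $\pi(M_{\mu_1})$ and $\pi(M_{\mu_2})$ are similar, so it suffices to recover $[\mu']$ from the similarity class of $\pi(M_\mu)$. For the support of $\mu'$ this is feasible: compressing to the spectral subspace $L^p(\mu|_{B(\lambda,r)})$ and asking whether the compression is similar modulo compact to an $\ell^p$-diagonal operator detects, by the failure of basis-uniqueness in $L^p$ for $p\neq 2$, whether nonatomic mass sits near $\lambda$, so $F=\mathrm{supp}(\mu')$ is determined. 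Promoting this from the support to the full measure class is the crux: two mutually singular nonatomic measures with the same support must be separated. I would attempt this by testing the similarity against the whole family of spectral idempotents $M_{\mathbf 1_\omega}$ at once and extracting a finitely additive set function from the asymptotic $L^p$-geometry of the conjugated idempotents, exploiting that for $p\neq 2$ the $p$-convexity structure of $L^p(\mu')$ remembers $[\mu']$ and cannot be flattened by a compact correction. This extraction is the step I am least sure of, and I would expect it to demand the main new idea of the paper.
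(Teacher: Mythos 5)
There is a genuine gap, and it sits exactly where you predicted: the necessity of $\mu_{1}'\sim\mu_{2}'$. Your proposed mechanism---extracting a finitely additive set function from the asymptotic geometry of conjugated spectral idempotents---is a program, not a proof, and it is not how the paper proceeds. The paper's decisive tool is Lemma \ref{nonembed}: for $p>2$, if $\nu$ is nontrivial, purely nonatomic and mutually singular with $\mu_{2}$, then no operator $L:L^{p}(\nu)\to L^{p}(\mu_{2})$ with $M_{\mu_{2}}L-LM_{\nu}$ compact can be bounded below. The mechanism is a growth-rate mismatch: take independent $\pm1$-valued (Rademacher-like) functions $f_{n}\in L^{p}(\nu)$; mutual singularity (via regularity of the measures, Lemma \ref{singular}) together with the compactness of the commutator (via Rosenblum's theorem, Lemma \ref{local}) lets one pass to slightly truncated functions $g_{k}$ whose images $Lg_{k}$ are essentially disjointly supported, so that $\|Lg_{1}+\ldots+Lg_{k}\|\lesssim\|L\|\,k^{1/p}$, while $\|g_{1}+\ldots+g_{k}\|\geq\|f_{1}+\ldots+f_{k}\|_{L^{2}(\nu)}-o(1)=\sqrt{k}-o(1)$. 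Since $k^{1/p}=o(\sqrt{k})$ for $p>2$, $L$ is not bounded below. One then applies this with $\nu$ the part of $\mu_{1}'$ singular to $\mu_{2}$ (which Lebesgue--Radon--Nikodym produces if equivalence fails) and $L$ the restriction of the similarity, obtaining a contradiction; the case $p<2$ follows by duality. This disjointly-supported-sums-versus-Rademacher-sums comparison is the concrete way $p\neq2$ enters, and it is absent from your sketch.

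There is also a secondary gap in your absorption lemma. After splitting $L^{p}(\mu')=V\oplus Y$ with $V\cong\ell^{p}$ spanned by the normalized indicators and $Y$ the kernel of the conditional expectation, the compression of $M_{\mu'}$ to $Y$ is \emph{not} a multiplication operator: $Y$ contains the mean-zero functions on each $S_{n}$ and is not of the form $L^{p}(\nu)$ for any measure $\nu$, so ``a further application of the same carving'' cannot identify $P_{Y}M_{\mu'}|_{Y}$ with $M_{\mu'}$, no matter how sparsely the $S_{n}$ are chosen; iterating the carving only produces an infinite regress. The paper avoids ever identifying the complement: Lemma \ref{diagembed} produces $L$ and $R$ with small compact commutators and $RL-I$ small and compact for the infinite direct sum $D\oplus D\oplus\ldots$, whence (Lemma \ref{atomabsorb}) $M_{\mu}+K$ is similar to $T\oplus D\oplus D\oplus\ldots$ for some \emph{unidentified} $T$, and the Pe\l{}czy\'nski-type swindle $T\oplus D\oplus D\oplus\ldots=(T\oplus D\oplus D\oplus\ldots)\oplus D$ absorbs one more copy of $D$ for free. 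Apart from these two points, your skeleton---essential spectrum detects cluster points, the Radon--Nikodym isometry handles the nonatomic parts, and absorption plus comparison of diagonal operators (your step two is the paper's Lemma \ref{outlier}) yields sufficiency---coincides with the paper's proof.
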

Let us compare Theorem \ref{main2} with the characterization when $p=2$. Suppose that $\mu_{1}$ and $\mu_{2}$ are purely nonatomic and mutually singular but have the same support. Then $M_{\mu_{1}}$ and $M_{\mu_{2}}$ are unitarily equivalent modulo compact operators when $p=2$ but are not similar modulo compact operators when $p\neq 2$.

On the other hand, instead, suppose that $\mu_{1}$ is the Lebesgue measure on $[0,1]$ and $\mu_{2}$ is the sum of the Lebesgue measure on $[0,1]$ and an (atomic) measure on the rational numbers in $[0,1]$. Then $M_{\mu_{1}}$ and $M_{\mu_{2}}$ are unitarily equivalent modulo compact operators when $p=2$ and are also similar modulo compact operators when $p\neq 2$. But in both cases $p=2$ and $p\neq 2$, the operators $M_{\mu_{1}}$ and $M_{\mu_{2}}$ are not similar (by considering the existence of eigenvalues) even though they have the same spectrum and essential spectrum.

In Section 2, we give the proof of Theorem \ref{main1} using generalized Haar systems. We also show that when $\mu$ is supported on $\mathbb{R}$, the compact operator $K$ in Theorem \ref{main1} can be chosen to be 1-summing. When $p=2$, this was shown by von Neumann \cite[Theorem 2.10]{Pearcy}. (1-summing operators on Hilbert space are precisely the Hilbert-Schmidt operators \cite{Pelczynski}.) In Section 3, we prove Theorem \ref{main2} using a technique from \cite{Korotov}. As a by-product, we also classify $M_{\mu}$ up to approximate similarity. In Section 4, we state a few open problems.

We begin by introducing some notation and terminology that are needed in what follows.

If $a\in\mathbb{C}$ and $r>0$, the square on the complex plane with center $a$ and side length $r$ is denoted by $S(a,r)$, i.e.,
\[S(a,r)=\left\{x+iy\in\mathbb{C}:|x-\mathrm{Re}\,a|\leq\frac{r}{2}\text{ and }|y-\mathrm{Im}\,a|\leq\frac{r}{2}\right\}.\]
If $A$ is a subset of $\mathbb{C}$, the indicator function of $A$ is denoted by $I(A)$; the diameter of $A$ is denoted by $\mathrm{diam}(A)$; and the complement of $A$ in $\mathbb{C}$ is denoted by $A^{c}$. If $\mu$ is a measure on $\mathbb{C}$ and $A$ is a Borel set in $\mathbb{C}$ then $P_{A}^{\mu}$ is the operator on $L^{p}(\mu)$ defined by
\[(P_{A}^{\mu}f)(z)=I(A)(z)f(z),\quad f\in L^{p}(\mu),\,z\in\mathbb{C}.\]
Let $X$ be a Banach space. A sequence $(u_{n})_{n\geq 1}$ is a $C$-{\it unconditional basis} for $X$ if
\[\frac{1}{C}\left\|\sum_{n=1}^{m}|a_{n}|u_{n}\right\|\leq\left\|\sum_{n=1}^{m}a_{n}u_{n}\right\|\leq C\left\|\sum_{n=1}^{m}|a_{n}|u_{n}\right\|,\]
for every $m\geq 1$ and $a_{1},\ldots,a_{m}\in\mathbb{C}$.

If $S$ is a subset of $X$, the closed linear span of $S$ in $X$ is denoted by $\vee S$. The space of operators on $X$ is denoted by $B(X)$.

If $X_{1}$ and $X_{2}$ are Banach spaces, two operators $T_{1}\in B(X_{1})$ and $T_{2}\in B(X_{2})$ are {\it approximately similar} \cite{Hadwin} if there is a sequence $(W_{n})_{n\geq 1}$ of isomorphisms from $X_{1}$ onto $X_{2}$ such that
\[\sup_{n\geq 1}\|W_{n}\|\|W_{n}^{-1}\|<\infty,\]
\[T_{2}-W_{n}T_{1}W_{n}^{-1}\text{ is compact for all }n\geq 1,\]
\[\lim_{n\to\infty}\|T_{2}-W_{n}T_{1}W_{n}^{-1}\|=0.\]
If $T$ is an operator from $X_{1}$ into $X_{2}$, the 1-summing norm (see, e.g., \cite{Diestel}) of $T$ is denoted by $\pi_{1}(T)$.

When we say a ``diagonal operator on $l^{p}$,'' we always mean an operator on $l^{p}$ that is diagonal with respect to the canonical basis for $l^{p}$. All measures are assumed to be finite. Throughout this paper, $1<p<\infty$ and $C(p)$ is a positive constant that depends only on $p$.
\section{Proof of Theorem \ref{main1}}
We begin by recalling from \cite{Dor} the definition of generalized Haar system. A system $(A_{n,j}:n=0,1,2,\ldots,\,1\leq j\leq 2^{n})$ of measurable sets is a {\it dyadic tree} if
\[A_{n+1,2j-1}\cap A_{n+1,2j}=\emptyset,\]
and
\[A_{n+1,2j-1}\cup A_{n+1,2j}=A_{n,j},\]
for all $n\geq 0$ and $1\leq j\leq 2^{n}$. Let
\[\Lambda=\{(n,j):\,n\geq 1,1\leq j\leq 2^{n-1},\mu(A_{n,2j-1})>0\text{ and }\mu(A_{n,2j})>0\}\cup\{(0,1)\}.\]
Let $\mu$ be a purely nonatomic measure. The {\it generalized Haar system} $\{h_{n,j}:(n,j)\in\Lambda\}$ with respect to $(A_{n,j})$ is defined as follows:
\[h_{0,1}=I(A_{0,1})/\|I(A_{0,1})\|_{L^{p}(\mu)}\]
and
\[h_{n,j}=H_{n,j}/\|H_{n,j}\|_{L^{p}(\mu)},\]
where
\[H_{n,j}=I(A_{n,2j-1})/\mu(A_{n,2j-1})-I(A_{n,2j})/\mu(A_{n,2j}),\quad n\geq 1,\,(n,j)\in\Lambda.\]
Note that when $(n,j)\notin\Lambda$, at least one of $\mu(A_{n,2j-1}),\mu(A_{n,2j-1})$ vanishes so both of $I(A_{n,2j-1})$ and $I(A_{n,2j})$ are already in the span of some $h_{m,k}$ where $m<n$. To see this, observe that if $\mu(A_{n,2j-1})=0$ then $I(A_{n,2j})=I(A_{n-1,j})$ which is in the span of some $h_{m,k}$ for some $m<n$.
\begin{lemma}[\cite{Dor}]\label{unconditional}
The system $\{h_{n,j}:(n,j)\in\Lambda\}$ defines a normalized $C(p)$-unconditional basis for $L^{p}(\mathscr{B},\mu)$ where $\mathscr{B}$ is the $\sigma$-algebra generated by the $A_{n,j}$.
\end{lemma}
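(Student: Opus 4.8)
The plan is to realize the generalized Haar system as a sequence of martingale differences and to deduce unconditionality from Burkholder's martingale transform inequality, which supplies a constant depending only on $p$. Throughout I would group the system by level and keep track of the fact that, within a level, the Haar functions have disjoint supports.

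First I would set up the filtration. Let $\mathscr{F}_{n}$ be the $\sigma$-algebra generated by $\{A_{n,j}:1\leq j\leq 2^{n}\}$, so that $\mathscr{F}_{0}\subseteq\mathscr{F}_{1}\subseteq\cdots$ and $\mathscr{B}=\sigma(\bigcup_{n}\mathscr{F}_{n})$. By construction each $H_{n,j}$ is $\mathscr{F}_{n}$-measurable, is supported on $A_{n-1,j}=A_{n,2j-1}\cup A_{n,2j}$, and satisfies $\int H_{n,j}\,d\mu=0$; hence after normalization $h_{n,j}$ is $\mathscr{F}_{n}$-measurable with $E[h_{n,j}\mid\mathscr{F}_{n-1}]=0$. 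Thus the level block $d_{n}=\sum_{j}a_{n,j}h_{n,j}$ is a martingale difference for $(\mathscr{F}_{n})$, and $\sum_{n}d_{n}$ is the terminal value of an $L^{p}$-martingale. Completeness is the assertion that $\bigcup_{N}\operatorname{span}\{h_{n,j}:n\leq N\}$, which equals $\bigcup_{N}L^{p}(\mathscr{F}_{N},\mu)$, is dense in $L^{p}(\mathscr{B},\mu)$; this follows from the $L^{p}$ martingale convergence theorem $E[f\mid\mathscr{F}_{N}]\to f$. Together with biorthogonality of the $h_{n,j}$ against their coefficient functionals, this yields the basis property, and the normalization $\|h_{n,j}\|_{L^{p}(\mu)}=1$ holds by definition.

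The heart of the argument is to show that changing coefficients $a_{n,j}\mapsto\theta_{n,j}a_{n,j}$ by unimodular scalars $\theta_{n,j}$ is exactly a \emph{predictable} martingale transform. The key observation is that within a fixed level $n$ the functions $h_{n,j}$ have pairwise disjoint supports $A_{n-1,j}$, so if I set
\[v_{n}=\sum_{j}\theta_{n,j}\,I(A_{n-1,j}),\]
then $v_{n}$ is $\mathscr{F}_{n-1}$-measurable with $|v_{n}|=1$ on the support of $d_{n}$, and $v_{n}d_{n}=\sum_{j}\theta_{n,j}a_{n,j}h_{n,j}$. Hence $\sum_{n}v_{n}d_{n}$ is the martingale transform of $\sum_{n}d_{n}$ by the predictable unimodular multiplier sequence $(v_{n})$. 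Burkholder's inequality then gives $\|\sum_{n}v_{n}d_{n}\|_{p}\leq C(p)\|\sum_{n}d_{n}\|_{p}$; applying it to this transform and to its reciprocal, and specializing $a_{n,j}=|b_{n,j}|$ with $\theta_{n,j}=b_{n,j}/|b_{n,j}|$, produces both inequalities in the definition of a $C(p)$-unconditional basis.

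The main obstacle I anticipate is handling complex coefficients with a constant still depending only on $p$: Burkholder's transform theorem is usually stated for real multipliers, so I would either invoke its complex analogue or split $\theta_{n,j}=\operatorname{Re}\theta_{n,j}+i\operatorname{Im}\theta_{n,j}$ and bound the two real predictable transforms separately, absorbing a harmless numerical factor into $C(p)$. A secondary point is to account correctly for degenerate nodes: when $(n,j)\notin\Lambda$ one child of $A_{n-1,j}$ is $\mu$-null, no Haar function is produced there, and as noted before the lemma the corresponding indicator already lies in the span of an earlier $h_{m,k}$; I would verify that omitting these indices preserves both the martingale-difference identities and the density of $\bigcup_{N}L^{p}(\mathscr{F}_{N},\mu)$.
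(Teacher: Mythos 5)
Your proof is correct: realizing the level blocks $d_{n}=\sum_{j}a_{n,j}h_{n,j}$ as martingale differences for the filtration $\mathscr{F}_{n}=\sigma(A_{n,j}:1\leq j\leq 2^{n})$, encoding coefficient changes by the predictable unimodular multiplier $v_{n}=\sum_{j}\theta_{n,j}I(A_{n-1,j})$ (legitimate precisely because the level-$n$ Haar functions have disjoint supports which are atoms of $\mathscr{F}_{n-1}$), and invoking Burkholder's martingale transform inequality (with the real/imaginary splitting for complex scalars) is the standard proof, with completeness correctly reduced to $L^{p}$ martingale convergence and the degenerate nodes $(n,j)\notin\Lambda$ handled as the paper indicates. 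The paper itself gives no proof of this lemma but simply quotes it from \cite{Dor}, and your argument is essentially the one underlying that citation, so the two approaches coincide.
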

\begin{lemma}\label{square}
There exists a dyadic tree $(A_{n,j}:n=0,1,2,\ldots,\,1\leq j\leq 2^{n})$ such that each $A_{n,j}$ is in $S(0,1)$ and
\begin{equation}\label{diameter}
\mathrm{diam}(A_{n,j})\leq\frac{1}{2^{\frac{n}{2}-1}},\quad n\geq 0,\,1\leq j\leq 2^{n},
\end{equation}
and the $\sigma$-algebra generated by the $A_{n,j}$ is the Borel $\sigma$-algebra on $S(0,1)$.
\end{lemma}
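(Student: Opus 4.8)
The plan is to build the tree by bisecting each rectangle alternately along the horizontal and the vertical axis, so that the side lengths are halved after every two levels. Identify $S(0,1)$ with the closed square $[-\frac12,\frac12]^{2}$ and put $A_{0,1}=S(0,1)$. Each $A_{n,j}$ will be a product $I\times J$ of two intervals contained in $S(0,1)$. To pass from level $n$ to level $n+1$ I cut the longer side in half: if $n$ is even and $I=[\alpha,\beta]$, I set $A_{n+1,2j-1}=[\alpha,\frac{\alpha+\beta}{2})\times J$ and $A_{n+1,2j}=[\frac{\alpha+\beta}{2},\beta]\times J$; if $n$ is odd I bisect $J$ in the same half-open fashion. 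The half-open convention on the side being cut forces $A_{n+1,2j-1}\cap A_{n+1,2j}=\emptyset$, while clearly $A_{n+1,2j-1}\cup A_{n+1,2j}=A_{n,j}$, so $(A_{n,j})$ is a dyadic tree all of whose members are (Borel) subsets of $S(0,1)$. By induction each level $\{A_{n,j}:1\le j\le 2^{n}\}$ is a partition of $S(0,1)$.

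For the bound (\ref{diameter}) I track the side lengths. Starting from the unit square and alternating the two cut directions, a direct induction shows that at level $n$ the rectangle $A_{n,j}$ has longer side $2^{-\lfloor n/2\rfloor}$ and shorter side $2^{-\lceil n/2\rceil}$. Since the diameter of a rectangle is at most $\sqrt{2}$ times its longer side,
\[\mathrm{diam}(A_{n,j})\le\sqrt{2}\cdot 2^{-\lfloor n/2\rfloor}\le\sqrt{2}\cdot 2^{-\frac{n-1}{2}}=2^{1-\frac{n}{2}},\]
which is exactly (\ref{diameter}).

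Finally, because $\mathrm{diam}(A_{n,j})\to 0$ uniformly as $n\to\infty$ and each level is a partition of $S(0,1)$, every point of $S(0,1)$ lies in arbitrarily small tree sets; hence any open $U\subseteq S(0,1)$ equals the countable union of those $A_{n,j}$ that it contains, and therefore belongs to the $\sigma$-algebra $\mathscr{B}$ generated by the tree. As the generators are themselves Borel, $\mathscr{B}$ coincides with the Borel $\sigma$-algebra on $S(0,1)$. The diameter estimate is a routine computation; the only points requiring care are the half-open bookkeeping, which is what makes each level an exact partition lying inside the \emph{closed} square, and the verification that the generated $\sigma$-algebra is all of Borel rather than something smaller.
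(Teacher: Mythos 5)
Your construction is essentially the same as the paper's: both build the tree by alternately bisecting the rectangles horizontally and vertically, using a half-open convention on the cut to make each level an exact disjoint partition of $S(0,1)$, and both get the bound (\ref{diameter}) because the sides halve every two levels. Your write-up additionally spells out the diameter estimate and the verification that the generated $\sigma$-algebra is the full Borel $\sigma$-algebra, details the paper leaves implicit; your argument is correct.
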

\begin{proof}
Take $A_{0,1}=S(0,1)$. Each $A_{n,j}$ will be defined as rectangles. Let $x_{n,j}+iy_{n,j}$ be the center of $A_{n,j}$. If $n\geq 0$ is even, take
\[A_{n+1,2j-1}=A_{n,j}\cap\{x+iy\in S(0,1):y>y_{n,j}\},\]
and
\[A_{n+1,2j}=A_{n,j}\cap\{x+iy\in S(0,1):y\leq y_{n,j}\}.\]
If $n$ is odd, take
\[A_{n+1,2j-1}=A_{n,j}\cap\{x+iy\in S(0,1):x>x_{n,j}\},\]
and
\[A_{n+1,2j}=A_{n,j}\cap\{x+iy\in S(0,1):x\leq x_{n,j}\}.\]
Then $A_{n,j}$ satisfies the required properties.
\end{proof}
\begin{proof}[Proof of Theorem \ref{main1}]
Without loss of generality, we may assume that $\mu$ is purely nonatomic and has support in $S(0,1)$. Let $(A_{n,j}:n=0,1,2,\ldots,\,1\leq j\leq 2^{n})$ be defined as in Lemma \ref{square}. Let $\{h_{n,j}:(n,j)\in\Lambda\}$ be the generalized Haar system with respect to $(A_{n,j})$. By Lemma \ref{unconditional}, $\{h_{n,j}:(n,j)\in\Lambda\}$ is a $C(p)$-unconditional basis for $L^{p}(\mu)$. Observe that
\begin{equation}\label{samespan}
\vee\{h_{n,j}:(n,j)\in\Lambda,\,n\leq N\}=\vee\{I(A_{N,j}):1\leq j\leq 2^{N}\},\quad N\geq 0.
\end{equation}
Fix $N\geq 1$. We have that $\{h_{n,j}:(n,j)\in\Lambda,\,n\geq N+1\}\cup\{I(A_{N,j}):1\leq j\leq 2^{N}\}$ is a $C(p)$-unconditional basis for $L^{p}(\mu)$.

For every $n\geq 0$, $1\leq j\leq 2^{n}$, pick a point $\lambda_{n,j}$ in $A_{n,j}$. If $A_{n,j}=\emptyset$, take $\lambda_{n,j}=0$. Since $h_{n,j}$ is supported on $A_{n-1,j}$, by (\ref{diameter}), we have
\begin{equation}\label{eventualdiagapproxp}
\|M_{\mu}h_{n,j}-\lambda_{n-1,j}h_{n,j}\|\leq\frac{1}{2^{\frac{n-1}{2}-1}},\quad (n,j)\in\Lambda,
\end{equation}
and
\begin{equation}\label{inidiagapproxp}
\|M_{\mu}I(A_{N,j})-\lambda_{N,j}I(A_{N,j})\|\leq\frac{1}{2^{\frac{N}{2}-1}}\|I(A_{N,j})\|,\quad 1\leq j\leq 2^{N}.
\end{equation}
Take $D$ to be the operator on $L^{p}(\mu)$ that is diagonal with respect to the unconditional basis $\{h_{n,j}:(n,j)\in\Lambda,\,n\geq N+1\}\cup\{I(A_{N,j}):1\leq j\leq 2^{N}\}$ and with entries $\lambda_{n-1,j}$, $(n,j)\in\Lambda$, $n\geq N+1$ and $\lambda_{N,j}$, $1\leq j\leq 2^{N}$.

Note that for each $n\geq 0$, all the $h_{n,j}$ have disjoint supports and all the $M_{\mu}h_{n,j}-\lambda_{n,j}h_{n,j}$ have disjoint supports. It follows that
\begin{equation}\label{eventualdiagapproxs}
\|M_{\mu}x-Dx\|\leq\frac{1}{2^{\frac{n-1}{2}-1}}\|x\|,\quad x\in\vee\{h_{n,j}:1\leq j\leq 2^{n-1},\,(n,j)\in\Lambda\},\,n\geq N+1.
\end{equation}
Similarly we have
\begin{equation}\label{inidiagapproxs}
\|M_{\mu}x-Dx\|\leq\frac{1}{2^{\frac{N}{2}-1}}\|x\|,\quad x\in\vee\{I(A_{N,j}):1\leq j\leq 2^{N}\}.
\end{equation}
For each $m\geq 0$, let $P_{m}$ be the projection from $L^{p}(\mu)$ onto $\vee\{h_{m,j}:1\leq j\leq 2^{m-1},\,(m,j)\in\Lambda\}$ when $m\geq 1$ or $\vee\{h_{0,1}\}$ when $m=0$. Since $\{h_{n,j}:(n,j)\in\Lambda\}$ has unconditional basis constant at most $C(p)$, we have $\|P_{m}\|\leq C(p)$ and $\|P_{0}+\ldots+P_{m}\|\leq C(p)$ for all $m\geq 0$. Thus by (\ref{eventualdiagapproxs}),
\begin{equation}\label{aboveN}
\|(M_{\mu}-D)P_{n}\|\leq\frac{C(p)}{2^{\frac{n-1}{2}-1}},\quad n\geq N+1.
\end{equation}
By (\ref{samespan}) and (\ref{inidiagapproxs}), we have
\begin{equation}\label{belowN}
\|(M_{\mu}-D)(P_{0}+\ldots+P_{N})\|\leq\frac{C(p)}{2^{\frac{N}{2}-1}}.
\end{equation}
Since $P_{0}+P_{1}+P_{2}+\ldots$ is the identity operator and each $P_{n}$ has finite rank, it follows that from (\ref{aboveN}) that $M_{\mu}-D$ is compact. From (\ref{aboveN}) and (\ref{belowN}), we have
\[\|M_{\mu}-D\|\leq\frac{C(p)}{2^{\frac{N}{2}-1}}+\sum_{n\geq N+1}\frac{C(p)}{2^{\frac{n-1}{2}-1}},\]
which is arbitarily small if $N$ is taken to be large enough. Thus the result follows.
\end{proof}
\begin{theorem}\label{main1abs}
Let $\mu$ be a measure on $\mathbb{R}$ with compact support. Then for every $\epsilon>0$, there exist a compact operator $K$ on $L^{p}(\mu)$ with $\pi_{1}(K)<\epsilon$ and an operator $D\in B(L^{p}(\mu))$ that is diagonal with respect to a $C(p)$-unconditional basis $(u_{n})_{n\geq 1}$ for $L^{p}(\mu)$ such that
\[M_{\mu}=D+K.\]
\end{theorem}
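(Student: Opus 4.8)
The plan is to rerun the proof of Theorem~\ref{main1}, carried out on $\mathbb{R}$, and to upgrade the operator-norm estimate to a $1$-summing estimate. First I would reduce, exactly as there, to the case where $\mu$ is purely nonatomic with support in a bounded interval: the atomic part of $M_{\mu}$ is already diagonal with respect to the (disjointly supported) indicators of the atoms and contributes nothing to $K$. The one essential change is to replace the planar dyadic tree of Lemma~\ref{square} by the one-dimensional tree obtained by repeatedly bisecting the interval, whose generated $\sigma$-algebra is the Borel $\sigma$-algebra; this yields $\mathrm{diam}(A_{n,j})\le C\,2^{-n}$ in place of the weaker bound~(\ref{diameter}). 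I would then build $D$ and $K=M_{\mu}-D$ from the generalized Haar system precisely as in Theorem~\ref{main1}, retaining the free parameter $N$.

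Next I would split $K$ along the levels of the mixed Haar/indicator basis, writing $K=K_{0}+\sum_{m>N}K_{m}$, where $K_{m}=K\tilde P_{m}$ is the level-$m$ contribution and $K_{0}$ is the contribution of the initial block $\vee\{I(A_{N,j}):1\le j\le 2^{N}\}$. For a fixed $m$ the functions $g_{m,j}=(z-\lambda_{m-1,j})h_{m,j}$ are supported on the pairwise disjoint sets $A_{m-1,j}$, so $K_{m}$ factors as $K_{m}=B_{m}A_{m}$, where $A_{m}\colon L^{p}(\mu)\to\ell^{p}$ is the coefficient map of $\tilde P_{m}$ (bounded, with $\|A_{m}\|\le C(p)$ since the basis is $C(p)$-unconditional) and $B_{m}\colon\ell^{p}\to L^{p}(\mu)$ sends $e_{j}$ to $g_{m,j}$. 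Because the $g_{m,j}$ are disjointly supported, $B_{m}$ is, via the natural isometry onto its range, the diagonal operator on $\ell^{p}$ with entries $b_{j}=\|g_{m,j}\|\le\mathrm{diam}(A_{m-1,j})\le C\,2^{-(m-1)}$, of which at most $2^{m-1}$ are nonzero. The ideal inequality $\pi_{1}(K_{m})\le\|A_{m}\|\,\pi_{1}(B_{m})$ thus reduces everything to bounding the $1$-summing norm of a diagonal operator on $\ell^{p}$.

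The heart of the matter is the estimate
\[\pi_{1}(D_{b}\colon\ell^{p}\to\ell^{p})\le C(p)\,\|b\|_{\min(2,p')},\]
which I would prove by Pietsch domination. For $p\ge2$ this is clean: $\|v\|_{p}\le\|v\|_{2}$, and by Khintchine's inequality $\|(b_{j}x_{j})\|_{2}\le\sqrt2\,\mathbb{E}_{\varepsilon}|\sum_{j}\varepsilon_{j}b_{j}x_{j}|$, so the random functional $\varepsilon\mapsto(\varepsilon_{j}b_{j})_{j}/\|b\|_{p'}$, which lies in the unit ball of $\ell^{p'}$, is a Pietsch measure and gives $\pi_{1}(D_{b})\le\sqrt2\,\|b\|_{p'}$. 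For $1<p\le2$ the Rademacher functionals no longer lie in $B_{\ell^{p'}}$; here I would use the cotype $2$ of $\ell^{p}$, for which $\pi_{1}(T)\le C(p)\,\pi_{2}(T)$, together with the bound $\pi_{2}(D_{b})\le C\,\|b\|_{2}$ coming from a Gaussian Pietsch measure $g\mapsto g/\|g\|_{p'}$. Feeding $b_{j}\le C\,2^{-(m-1)}$ and at most $2^{m-1}$ nonzero entries into $\|b\|_{\min(2,p')}$ gives the geometric bound $\pi_{1}(K_{m})\le C(p)\,2^{-(m-1)/\max(2,p)}$, and the identical computation gives $\pi_{1}(K_{0})\le C(p)\,2^{-N/\max(2,p)}$. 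By subadditivity and completeness of the $1$-summing ideal, $\pi_{1}(K)\le C(p)\,2^{-N/\max(2,p)}$, which is $<\epsilon$ once $N$ is large.

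I would flag two points as the crux. First, the geometric decay is exactly what forces the restriction to $\mathbb{R}$: the exponent $1-1/\min(2,p')=1/\max(2,p)$ multiplies the one-dimensional diameter bound $2^{-(m-1)}$, whereas the planar bound $2^{-(m-1)/2}$ would leave $\pi_{1}(K_{m})$ bounded below (indeed nondecreasing in $m$), since $\min(2,p')\le2$. Second, the $p\le2$ case genuinely uses the cotype $2$ geometry of $\ell^{p}$: the coordinate Pietsch measure yields only the useless bound $\pi_{1}(D_{b})\le\|b\|_{1}$, and establishing the clean $1$-summing estimate for diagonal operators on $\ell^{p}$, $1<p\le2$, is the main technical obstacle.
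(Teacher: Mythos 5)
Your proposal follows essentially the same route as the paper's: the paper likewise reduces to a purely nonatomic measure supported in $[0,1]$, replaces the planar tree by dyadic intervals (gaining the bound $2^{-n}$ in place of $2^{-n/2}$), keeps the same $D$, and bounds $\pi_{1}$ level by level using exactly your key estimate --- the paper's Lemma \ref{diagabs} is precisely $\pi_{1}(D_{b}\colon l^{p}\to l^{p})\leq C(p)\|b\|_{\min(2,p')}$ --- before summing the geometric series $2^{-(n-1)/\max(2,p)}$. The one substantive difference is that the paper states Lemma \ref{diagabs} without proof (citing the techniques of \cite{Pelczynski}), whereas you attempt to prove it; your $p\geq 2$ argument (Khintchine with the sign vectors $(\varepsilon_{j}b_{j})/\|b\|_{p'}$) is correct.

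However, your $p\leq 2$ step fails as written: the unweighted Gaussian measure, i.e.\ the distribution of $g/\|g\|_{p'}$ over the coordinates supporting $b$, does not give a Pietsch domination with constant $C\|b\|_{2}$. Concretely, take $b_{1}=1$, $b_{2}=\dots=b_{n}=\delta$ with $\delta$ tiny, and $x=e_{1}$; then $\|D_{b}x\|_{p}=1$ while $\|b\|_{2}^{2}\,\mathbb{E}\bigl[|\langle g,x\rangle|^{2}/\|g\|_{p'}^{2}\bigr]\approx n^{-2/p'}\to 0$, so no uniform constant exists. The measure must be adapted to $b$: take the sign vectors $\xi_{\varepsilon}=\bigl(\varepsilon_{j}|b_{j}|^{2/p'}\bigr)_{j}/\|b\|_{2}^{2/p'}$, which lie on the unit sphere of $\ell^{p'}$, so that $\mathbb{E}_{\varepsilon}|\langle\xi_{\varepsilon},x\rangle|^{2}=\|b\|_{2}^{-4/p'}\sum_{j}|b_{j}|^{4/p'}|x_{j}|^{2}$. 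Writing $|b_{j}|^{p}|x_{j}|^{p}=|b_{j}|^{2-p}\bigl(|b_{j}|^{4/p'}|x_{j}|^{2}\bigr)^{p/2}$ and applying H\"older with exponents $2/(2-p)$ and $2/p$ yields $\|D_{b}x\|_{p}^{2}\leq\|b\|_{2}^{2}\,\mathbb{E}_{\varepsilon}|\langle\xi_{\varepsilon},x\rangle|^{2}$, i.e.\ $\pi_{2}(D_{b})\leq\|b\|_{2}$; your cotype-$2$ upgrade $\pi_{1}\leq C(p)\pi_{2}$ (Maurey) then finishes the argument exactly as you intended. With this one repair the proposal is a complete proof, and it in fact supplies the proof of the lemma that the paper leaves to the reader.
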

We need the following lemma to prove Theorem \ref{main1abs}. This lemma should be well known and can be proved using the techniques in \cite{Pelczynski}.
\begin{lemma}\label{diagabs}
Let $D_{0}$ be a diagonal operator on $l^{p}$ with entries of $a_{1},a_{2},\ldots$. Then
\[\pi_{1}(D_{0})\leq\begin{array}{cc}\begin{cases}C(p)\left(\sum|a_{n}|^{q}\right)^{\frac{1}{q}},&p\geq 2\\C(p)\left(\sum|a_{n}|^{2}\right)^{\frac{1}{2}},&p\leq 2\end{cases},\end{array}\]
where $\displaystyle\frac{1}{p}+\frac{1}{q}=1$.
\end{lemma}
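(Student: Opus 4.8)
The plan is to bound $\pi_{1}(D_{0})$ from above by producing a single Pietsch dominating measure and estimating it with Khintchine's inequality. First I would make two harmless reductions. Since replacing each $a_{n}$ by a unimodular multiple amounts to composing $D_{0}$ with a diagonal isometry, which does not change the $1$-summing norm, I may assume $a_{n}\geq 0$. Since $\pi_{1}(D_{0})$ is the supremum of the $1$-summing norms of the truncations of $D_{0}$ to the first $N$ coordinates, while the right-hand sides $\left(\sum|a_{n}|^{q}\right)^{1/q}$ and $\left(\sum|a_{n}|^{2}\right)^{1/2}$ only increase with $N$, it suffices to prove the bound when only finitely many $a_{n}$ are nonzero and then let $N\to\infty$. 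Recall (see \cite{Diestel}) the Pietsch domination theorem: to show $\pi_{1}(D_{0})\leq\lambda$ it is enough to produce a Borel probability measure $\mu$ on the dual ball $B_{l^{q}}$ (with its weak$^{*}$ topology) such that $\|D_{0}x\|_{p}\leq\lambda\int_{B_{l^{q}}}|\langle\xi,x\rangle|\,d\mu(\xi)$ for every $x\in l^{p}$.

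For the measure I would take $\mu$ to be the distribution of the random vector $\sum_{n}\varepsilon_{n}b_{n}e_{n}$, where $(e_{n})$ is the canonical basis, $(\varepsilon_{n})$ are independent Steinhaus variables (uniform on the unit circle), and $b=(b_{n})$ is a fixed sequence with $\|b\|_{q}=1$ to be chosen. Since $|\varepsilon_{n}|=1$, this vector has $l^{q}$ norm $\|b\|_{q}=1$ almost surely, so $\mu$ is supported on $B_{l^{q}}$. For this measure $\int|\langle\xi,x\rangle|\,d\mu=\mathbb{E}\bigl|\sum_{n}\varepsilon_{n}b_{n}x_{n}\bigr|$, and the complex Khintchine inequality gives $\mathbb{E}\bigl|\sum_{n}\varepsilon_{n}b_{n}x_{n}\bigr|\geq c_{0}\bigl(\sum_{n}|b_{n}x_{n}|^{2}\bigr)^{1/2}$ with a universal constant $c_{0}>0$. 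Thus it remains, in each case, to choose $b$ so that $\|D_{0}x\|_{p}=\bigl(\sum_{n}|a_{n}x_{n}|^{p}\bigr)^{1/p}$ is dominated by a constant times $\bigl(\sum_{n}|b_{n}x_{n}|^{2}\bigr)^{1/2}$; the domination inequality then holds with $\lambda=c_{0}^{-1}$ times that constant.

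When $p\leq 2$ I would set $1/r=1/p-1/2$ and apply Hölder's inequality coordinatewise to the factorization $a_{n}x_{n}=(a_{n}/b_{n})(b_{n}x_{n})$, obtaining $\|D_{0}x\|_{p}\leq\|a/b\|_{r}\,\bigl(\sum_{n}|b_{n}x_{n}|^{2}\bigr)^{1/2}$. A short Hölder computation then shows $\min_{\|b\|_{q}=1}\|a/b\|_{r}=\|a\|_{2}$, attained at $b_{n}=a_{n}^{2/q}/\|a\|_{2}^{2/q}$ (the exponents match precisely because $1/q+1/r=1/2$), where coordinates with $a_{n}=0$ are simply omitted. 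This yields $\|D_{0}x\|_{p}\leq\|a\|_{2}\bigl(\sum_{n}|b_{n}x_{n}|^{2}\bigr)^{1/2}$ and hence $\pi_{1}(D_{0})\leq c_{0}^{-1}\|a\|_{2}$. When $p\geq 2$ I would instead use the contractive inclusion $l^{2}\hookrightarrow l^{p}$ together with the choice $b_{n}=a_{n}/\|a\|_{q}$ (so $\|b\|_{q}=1$ and $a_{n}/b_{n}=\|a\|_{q}$), which gives $\|D_{0}x\|_{p}\leq\bigl(\sum_{n}|a_{n}x_{n}|^{2}\bigr)^{1/2}=\|a\|_{q}\bigl(\sum_{n}|b_{n}x_{n}|^{2}\bigr)^{1/2}$ and hence $\pi_{1}(D_{0})\leq c_{0}^{-1}\|a\|_{q}$.

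The routine parts are the Hölder and inclusion estimates; the conceptual core is the reduction to a Khintchine estimate via Pietsch domination. The one point to watch, and the main obstacle, is that the constant must not degenerate with the dimension: the lower Khintchine constant $c_{0}$ is dimension-free, so the bound survives the passage $N\to\infty$ to infinitely many entries, and the argument in fact produces an absolute constant in place of $C(p)$.
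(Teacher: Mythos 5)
Your argument is correct, and it is worth noting that the paper never actually proves Lemma \ref{diagabs}: it only remarks that the lemma ``should be well known and can be proved using the techniques in \cite{Pelczynski},'' so your write-up supplies a proof where the paper gives a pointer. Your route --- reduce to finitely many nonzero, nonnegative entries (unimodular factors are diagonal isometries, and $\pi_{1}(D_{0})=\sup_{N}\pi_{1}(D_{0}P_{N})$ because the truncated test vectors $P_{N}x_{i}$ obey the same weak-$l^{1}$ normalization while $D_{0}P_{N}x_{i}\to D_{0}x_{i}$ in norm), then exhibit an explicit Pietsch dominating measure as the law of $\sum_{n}\varepsilon_{n}b_{n}e_{n}$ on the unit sphere of $l^{q}$ and bound $\int|\langle\xi,x\rangle|\,d\mu$ below by the Steinhaus--Khintchine inequality --- only uses the easy direction of Pietsch domination, and the two weight choices do exactly what is needed: for $p\leq 2$, with $\frac{1}{r}=\frac{1}{p}-\frac{1}{2}$ and $b_{n}=a_{n}^{2/q}/\|a\|_{2}^{2/q}$ one has $r(1-\frac{2}{q})=2$, hence $\|a/b\|_{r}=\|a\|_{2}$ as you claim; for $p\geq 2$ the contraction $l^{2}\subset l^{p}$ together with $b_{n}=a_{n}/\|a\|_{q}$ gives $\|a\|_{q}$. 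This is in the same spirit as Pe\l{}czy\'nski's techniques (Khintchine's inequality is the engine there as well, in the Hilbert-space case $p=q=2$), but your version works directly on $l^{p}$ for all $1<p<\infty$ and, as you observe, produces a universal constant $c_{0}^{-1}$ rather than a $p$-dependent one, which is slightly stronger than what the lemma asserts. The only cosmetic quibble is that you do not need the assertion that your $b$ \emph{minimizes} $\|a/b\|_{r}$ (only that it achieves the value $\|a\|_{2}$), and coordinates with $a_{n}=0$ must indeed be dropped from the factorization $a_{n}x_{n}=(a_{n}/b_{n})(b_{n}x_{n})$, as you note.
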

We explain the changes we need to make on the proof of Theorem \ref{main1} to prove Theorem \ref{main1abs}. Since $\mu$ is supported on $\mathbb{R}$, we may assume that $\mu$ has support in $[0,1]$ and is purely nonatomic. Take $A_{n,j}$ to be the dyadic intervals. The length of $A_{n,j}$ is $\displaystyle\frac{1}{2^{n}}$. (\ref{eventualdiagapproxp}) and (\ref{inidiagapproxp}) can be strengthened and become
\[\|M_{\mu}h_{n,j}-\lambda_{n-1,j}h_{n,j}\|\leq\frac{1}{2^{n-1}},\quad (n,j)\in\Lambda,\]
and
\[\|M_{\mu}I(A_{N,j})-\lambda_{N,j}I(A_{N,j})\|\leq\frac{1}{2^{N}}\|I(A_{N,j})\|,\quad 1\leq j\leq 2^{N}.\]
The operator $D$ is defined in the same way. By Lemma \ref{diagabs}, we have
\[\pi_{1}((M_{\mu}-D)|_{\vee\{h_{n,j}:1\leq j\leq 2^{n-1},(n,j)\in\Lambda\}})\leq C(p)\cdot 2^{(n-1)/\min(q,2)}/2^{n-1},\quad n\geq N+1,\]
and
\[\pi_{1}((M_{\mu}-D)|_{\vee\{I(A_{N,j}):1\leq j\leq 2^{N}\}})\leq C(p)\cdot 2^{N/\min(q,2)}/2^{N}.\]
The rest is analogous to the proof of Theorem \ref{main1}.
\section{Proof of Theorem \ref{main2}}
\begin{lemma}\label{estimate}
If $a,b>0$ then
\[(a+b)^{p}\leq b^{p}+ap(a+b)^{p-1}.\]
\end{lemma}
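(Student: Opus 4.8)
The plan is to recast the inequality in its additive form and then bound a single increasing integrand. Subtracting $b^{p}$ from both sides, the claim is equivalent to
\[(a+b)^{p}-b^{p}\leq ap(a+b)^{p-1}.\]
First I would write the left-hand side as an integral of the derivative of $t\mapsto t^{p}$ via the fundamental theorem of calculus:
\[(a+b)^{p}-b^{p}=\int_{b}^{a+b}pt^{p-1}\,dt.\]

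The key observation is that, since $1<p<\infty$, the exponent $p-1$ is positive, so the map $t\mapsto t^{p-1}$ is increasing on $(0,\infty)$. Hence for every $t$ in the interval of integration $[b,a+b]$ we have $t^{p-1}\leq(a+b)^{p-1}$. Replacing the integrand by this constant upper bound and noting that the interval $[b,a+b]$ has length $a$ gives
\[\int_{b}^{a+b}pt^{p-1}\,dt\leq\int_{b}^{a+b}p(a+b)^{p-1}\,dt=ap(a+b)^{p-1},\]
which is exactly the desired inequality. Equivalently, one may argue by convexity: since $t\mapsto t^{p}$ is convex for $p>1$, its tangent line at the point $a+b$ lies below the graph, so evaluating at $t=b$ yields $b^{p}\geq(a+b)^{p}+p(a+b)^{p-1}(b-(a+b))=(a+b)^{p}-ap(a+b)^{p-1}$, and rearranging gives the result.

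I do not expect any genuine obstacle here: the statement is an elementary one-variable estimate, and the only point that must be used is the monotonicity (equivalently, convexity) afforded by $p>1$. It is worth remarking only that the hypothesis $1<p<\infty$ is what makes $t\mapsto t^{p-1}$ increasing; at $p=1$ the inequality degenerates to an equality, so the bound is sharp as $p\to 1^{+}$, and this is consistent with the standing assumption of the paper.
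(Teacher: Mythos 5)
Your proof is correct and follows exactly the paper's argument: writing $(a+b)^{p}-b^{p}=\int_{b}^{a+b}pt^{p-1}\,dt$ and bounding the increasing integrand by its value at $a+b$. The convexity remark is a pleasant equivalent reformulation, but the core argument is the same one-line integral estimate the paper uses.
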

\begin{proof}
\[(a+b)^{p}-b^{p}=\int_{b}^{a+b}pt^{p-1}\,dt\leq ap(a+b)^{p-1}.\]
\end{proof}
\begin{lemma}\label{local}
Let $\mu_{1}$ and $\mu_{2}$ be measures on $\mathbb{C}$ with compact supports. Let $L:L^{p}(\mu_{1})\to L^{p}(\mu_{2})$ be an operator such that $M_{\mu_{2}}L-LM_{\mu_{1}}$ is compact. Let $D\subset E\subset\mathbb{C}$ be such that $D$ is compact and $E$ is open. Then the operator $P_{D}^{\mu_{2}}LP_{E^{c}}^{\mu_{1}}:L^{p}(\mu_{1})\to L^{p}(\mu_{2})$ is compact.
\end{lemma}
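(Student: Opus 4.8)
The plan is to reduce the claim to a $\bar\partial$-functional-calculus computation in which the insertion of the two projections is decisive. Since $D$ is compact, $E$ is open, and $D\subset E$, we have $\delta:=\mathrm{dist}(D,E^{c})>0$. I would fix $\phi\in C^{\infty}_{c}(\mathbb{C})$ with $0\le\phi\le1$, with $\phi\equiv1$ on a neighborhood of $D$, and with $\mathrm{supp}\,\phi$ a compact subset of $E$. Then the set $\Gamma:=\mathrm{supp}(\partial_{\bar w}\phi)$, where $\partial_{\bar w}=\tfrac12(\partial_{x}+i\partial_{y})$ for $w=x+iy$, is a compact subset of $E\setminus D$, so that $\mathrm{dist}(\Gamma,D)>0$ and $\mathrm{dist}(\Gamma,E^{c})>0$.

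For $w\in\Gamma$ I would introduce the multiplication operators $R_{D}(w)\in B(L^{p}(\mu_{2}))$ and $R_{E^{c}}(w)\in B(L^{p}(\mu_{1}))$ with symbols $I(D)(z)/(z-w)$ and $I(E^{c})(z)/(z-w)$, respectively. The crucial point, and the reason the projections cannot be omitted, is that although the resolvent $(M_{\mu_{i}}-w)^{-1}$ need not exist for $w\in\Gamma$ (since $\Gamma$ may meet the supports of $\mu_{1},\mu_{2}$), these truncated operators are bounded, with $\|R_{D}(w)\|\le 1/\mathrm{dist}(\Gamma,D)$ and $\|R_{E^{c}}(w)\|\le 1/\mathrm{dist}(\Gamma,E^{c})$ uniformly on $\Gamma$, and $w\mapsto R_{D}(w)$, $w\mapsto R_{E^{c}}(w)$ are norm-continuous there.

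Setting $C:=M_{\mu_{2}}L-LM_{\mu_{1}}$, which is compact by hypothesis, the elementary symbol identities $R_{D}(w)M_{\mu_{2}}=P_{D}^{\mu_{2}}+wR_{D}(w)$ and $M_{\mu_{1}}R_{E^{c}}(w)=P_{E^{c}}^{\mu_{1}}+wR_{E^{c}}(w)$ (both from $z/(z-w)=1+w/(z-w)$) yield the purely algebraic relation, valid for each $w\in\Gamma$,
\[R_{D}(w)\,C\,R_{E^{c}}(w)=P_{D}^{\mu_{2}}L\,R_{E^{c}}(w)-R_{D}(w)\,LP_{E^{c}}^{\mu_{1}}.\]
The left-hand side is a compact operator depending norm-continuously and boundedly on $w\in\Gamma$, so $\tfrac1\pi\int_{\Gamma}(\partial_{\bar w}\phi)(w)\,R_{D}(w)CR_{E^{c}}(w)\,dA(w)$, with $dA$ area measure, exists as a Bochner integral over the compact set $\Gamma$ and is compact, the space of compact operators being closed. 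Integrating the right-hand side against $\tfrac1\pi(\partial_{\bar w}\phi)(w)\,dA(w)$ and invoking the Cauchy--Pompeiu formula $\phi(z)=\tfrac1\pi\int_{\mathbb{C}}\frac{(\partial_{\bar w}\phi)(w)}{z-w}\,dA(w)$, I would evaluate the two resulting multiplication operators: $\tfrac1\pi\int_{\Gamma}(\partial_{\bar w}\phi)(w)R_{E^{c}}(w)\,dA(w)$ has symbol $I(E^{c})(z)\phi(z)$, which is $0$ since $\phi\equiv0$ on $E^{c}$, while $\tfrac1\pi\int_{\Gamma}(\partial_{\bar w}\phi)(w)R_{D}(w)\,dA(w)$ has symbol $I(D)(z)\phi(z)=I(D)(z)$, that is, it equals $P_{D}^{\mu_{2}}$ since $\phi\equiv1$ on $D$. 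Hence $P_{D}^{\mu_{2}}LP_{E^{c}}^{\mu_{1}}=-\tfrac1\pi\int_{\Gamma}(\partial_{\bar w}\phi)(w)\,R_{D}(w)CR_{E^{c}}(w)\,dA(w)$ is compact.

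The main obstacle is conceptual rather than computational. Writing $M^{(i)}_{\psi}$ for multiplication by $\psi$ on $L^{p}(\mu_{i})$, the set of symbols $\psi$ for which $M^{(2)}_{\psi}L-LM^{(1)}_{\psi}$ is manifestly compact is a closed algebra containing $z$ and the constants, but it need not contain $\bar z$, and one cannot separate the compact set $D$ from the closed set $E^{c}$ by a function in the closed algebra generated by $z$; thus a direct Stone--Weierstrass argument is unavailable. The $\bar\partial$-calculus sidesteps this, and what makes it succeed is precisely that, once $P_{D}^{\mu_{2}}$ and $P_{E^{c}}^{\mu_{1}}$ are in place, the truncated resolvents $R_{D}(w)$ and $R_{E^{c}}(w)$ are bounded on the transition region $E\setminus D$ that carries $\partial_{\bar w}\phi$, even where the genuine resolvents blow up. The remaining steps, namely justifying that the Bochner integral of these multiplication operators is again multiplication by the integrated symbol, are routine given the uniform bounds and norm-continuity on $\Gamma$, and present no difficulty.
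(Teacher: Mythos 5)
Your proof is correct, but it takes a different route from the paper's. The paper compresses first: writing $A=P_{D}^{\mu_{2}}M_{\mu_{2}}P_{D}^{\mu_{2}}$ and $B=P_{E^{c}}^{\mu_{1}}M_{\mu_{1}}P_{E^{c}}^{\mu_{1}}$ (as operators on the ranges of the projections), the same algebraic cancellation you use shows that $AX-XB=P_{D}^{\mu_{2}}(M_{\mu_{2}}L-LM_{\mu_{1}})P_{E^{c}}^{\mu_{1}}$ is compact for $X=P_{D}^{\mu_{2}}LP_{E^{c}}^{\mu_{1}}$; since $\sigma(A)\subset D$ and $\sigma(B)\subset E^{c}$ are disjoint closed sets, Rosenblum's theorem gives that the Sylvester map $X\mapsto AX-XB$ is invertible with an inverse (a contour integral of resolvents against a compact argument) that preserves compactness, so $X$ is compact. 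Your argument is in essence a self-contained, constructive proof of exactly this instance of Rosenblum's theorem: your truncated operators $R_{D}(w)$ and $R_{E^{c}}(w)$ are precisely the resolvents of the compressed operators $A$ and $B$ extended by zero, your identity $R_{D}(w)CR_{E^{c}}(w)=XR_{E^{c}}(w)-R_{D}(w)X$ is the standard Rosenblum resolvent identity, and the integration against $\tfrac{1}{\pi}(\partial_{\bar w}\phi)\,dA$ via Cauchy--Pompeiu plays the role that the contour integral (with winding number $1$ around $\sigma(A)$ and $0$ around $\sigma(B)$) plays in the classical proof. What your version buys: no appeal to an external theorem, no need to identify spectra of compressions or to choose a contour separating $D$ from the possibly unbounded closed set $E^{c}$ (the smooth cutoff $\phi$ does this separation automatically), and everything stays inside the algebra of multiplication operators where the needed bounds are immediate. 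What it costs: length, and the (routine but real) verifications that the Bochner integrals exist and that the integral of a family of multiplication operators is multiplication by the integrated symbol. Both proofs are sound; yours would serve as a reasonable replacement if one wished to avoid citing \cite{Radjavi}.
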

\begin{proof}
Since $M_{\mu_{2}}L-LM_{\mu_{1}}$ is compact,
\begin{align*}
&(P_{D}^{\mu_{2}}M_{\mu_{2}}P_{D}^{\mu_{2}})(P_{D}^{\mu_{2}}LP_{E^{c}}^{\mu_{1}})-(P_{D}^{\mu_{2}}LP_{E^{c}}^{\mu_{1}})(P_{E^{c}}^{\mu_{1}}M_{\mu_{1}}P_{E^{c}}^{\mu_{1}})\\=&P_{D}^{\mu_{2}}(M_{\mu_{2}}L-LM_{\mu_{1}})P_{E^{c}}^{\mu_{1}}
\end{align*}
is compact. Note that the operators $P_{D}^{\mu_{2}}M_{\mu_{2}}P_{D}^{\mu_{2}}$ and $P_{E^{c}}^{\mu_{1}}M_{\mu_{1}}P_{E^{c}}^{\mu_{1}}$ have disjoint spectra (as operators on $P_{D}^{\mu_{2}}(L^{p}(\mu_{2}))$ and $P_{E^{c}}^{\mu_{1}}(L^{p}(\mu_{1}))$, respectively.) So by Rosenblum's Theorem (see \cite[Theorem 0.12]{Radjavi}), $P_{D}^{\mu_{2}}LP_{E^{c}}^{\mu_{1}}$ is compact.
\end{proof}
\begin{lemma}\label{singular}
Let $\mu_{1},\mu_{2}$ be a mutually singular measures on $\mathbb{C}$. Let $f\in L^{p}(\mu_{2})$. Then for every $\epsilon>0$, there exist $F\subset G\subset\mathbb{C}$ such that $F$ is compact, $G$ is open, $\mu_{1}(G)<\epsilon$ and $\|P_{F^{c}}^{\mu_{2}}f\|_{L^{p}(\mu_{2})}<\epsilon$.
\end{lemma}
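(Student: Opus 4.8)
The plan is to exploit the Radon regularity of finite Borel measures on $\mathbb{C}=\mathbb{R}^{2}$, together with the geometric separation afforded by mutual singularity, so that essentially no hard analysis is needed.

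First I would record the consequence of mutual singularity: there is a Borel set $A\subset\mathbb{C}$ with $\mu_{1}(A)=0$ and $\mu_{2}(A^{c})=0$. Thus all of the $\mu_{2}$-mass, and in particular all of $f$, is carried by $A$, whereas $\mu_{1}$ ignores $A$ entirely. Next I would introduce the finite Borel measure $\nu$ defined by $\nu(B)=\int_{B}|f|^{p}\,d\mu_{2}$, which is finite since $f\in L^{p}(\mu_{2})$. Because $\mathbb{C}$ is a Polish space, every finite Borel measure on it is Radon, hence inner regular with respect to compact sets. Applying inner regularity of $\nu$ to the set $A$, and using that $\nu(A)=\nu(\mathbb{C})=\|f\|_{L^{p}(\mu_{2})}^{p}<\infty$, I can choose a compact set $F\subset A$ with $\nu(A\setminus F)<\epsilon^{p}$.

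With this $F$ in hand, the first required estimate is immediate: since $\mu_{2}(A^{c})=0$ we have $\int_{F^{c}}|f|^{p}\,d\mu_{2}=\nu(F^{c})=\nu(A\setminus F)<\epsilon^{p}$, which is exactly $\|P_{F^{c}}^{\mu_{2}}f\|_{L^{p}(\mu_{2})}<\epsilon$. For the second estimate, I would observe that $F\subset A$ forces $\mu_{1}(F)\le\mu_{1}(A)=0$; then outer regularity of the finite Borel measure $\mu_{1}$ (again available because $\mu_{1}$ is Radon) yields an open set $G\supset F$ with $\mu_{1}(G)<\epsilon$. The pair $F\subset G$ then satisfies all of the stated conditions.

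The argument is purely measure-theoretic and presents no serious obstacle. The only point demanding care is making sure the two regularity properties used, namely inner regularity of $\nu$ and outer regularity of $\mu_{1}$, are genuinely available; both follow from the standard fact that finite Borel measures on $\mathbb{R}^{2}$ are Radon, so one should simply be explicit about invoking this, and about the bookkeeping $F^{c}\cap A=A\setminus F$ that reduces the $\mu_{2}$-integral over $F^{c}$ to the controlled tail $\nu(A\setminus F)$.
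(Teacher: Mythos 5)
Your proof is correct and follows essentially the same route as the paper's: both extract a Borel set $A$ with $\mu_{1}(A)=0$, $\mu_{2}(A^{c})=0$ from mutual singularity and then invoke regularity of finite Borel measures on $\mathbb{C}$. The only difference is organizational: the paper first takes an open $G\supset A$ with $\mu_{1}(G)<\epsilon$, then a compact $F\subset G$ with $\mu_{2}(G\setminus F)<\delta$, and finishes by choosing $\delta$ via absolute continuity of the integral, whereas you apply inner regularity directly to $\nu=|f|^{p}\,d\mu_{2}$ to get $F\subset A$ and then outer-regularize $\mu_{1}$ around the $\mu_{1}$-null set $F$, which neatly eliminates the $\delta$ step.
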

\begin{proof}
There exists a Borel set $A$ in $\mathbb{C}$ such that $\mu_{1}(A)=0$ and $\mu_{2}(A^{c})=0$. Note that $\mu_{1}$ and $\mu_{2}$ are regular \cite[Theorem 2.18]{Rudin}. There exists an open set $G$ containing $A$ such that $\mu_{1}(G)<\epsilon$. For every $\delta>0$, there exists a compact subset $F$ of $G$ such that $\mu_{2}(G\backslash F)<\delta$. Since $A\subset G$, $G^{c}\subset A^{c}$ so $\mu_{2}(G^{c})=0$ so $\mu_{2}(F^{c})<\delta$. Choose $\delta>0$ small enough so that we have $\|P_{F^{c}}^{\mu_{2}}f\|_{L^{p}(\mu_{2})}<\epsilon$.
\end{proof}
\begin{lemma}\label{nonembed}
Assume that $p>2$. Let $\mu_{1}$ and $\mu_{2}$ be mutually singular measures on $\mathbb{C}$ with compact supports. Assume that $\mu_{1}$ is purely nonatmoic and nontrivial. Let $L:L^{p}(\mu_{1})\to L^{p}(\mu_{2})$ be an operator such that $M_{\mu_{2}}L-LM_{\mu_{1}}$ is compact. Then $L$ is not bounded below.
\end{lemma}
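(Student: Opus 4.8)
The plan is to argue by contradiction: assume $L$ is bounded below, say $\|Lf\|\geq c\|f\|$ for all $f\in L^{p}(\mu_{1})$, and set $M=\|L\|$. The idea is to manufacture inside $L^{p}(\mu_{1})$ a sequence behaving like the unit vector basis of $\ell^{2}$, transport it by $L$, and then use mutual singularity together with the intertwining relation to force its image to vanish in the limit, contradicting the lower bound. Since $\mu_{1}$ is purely nonatomic and nontrivial, I would take a sequence $(r_{n})$ of independent mean-zero (Rademacher-type) functions in $L^{p}(\mu_{1})$. Such a sequence is weakly null, is uniformly integrable in the strong sense that $\|P^{\mu_{1}}_{E}r_{n}\|=\mu_{1}(E)^{1/p}$ for every Borel set $E$, and, by Khintchine's inequality, satisfies $\|\sum a_{n}r_{n}\|_{L^{p}(\mu_{1})}\asymp(\sum|a_{n}|^{2})^{1/2}$, so it is equivalent to the $\ell^{2}$ basis.

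Write $y_{n}=Lr_{n}$. Since $L$ is bounded below, $(y_{n})$ inherits the lower estimate $\|\sum a_{n}y_{n}\|_{L^{p}(\mu_{2})}\geq c'(\sum|a_{n}|^{2})^{1/2}$. Here is where the hypothesis $p>2$ enters. By the Kadec--Pelczynski dichotomy (whose ``escaping mass'' alternative is produced by a gliding-hump construction, the relevant $\ell^{p}$ estimate being exactly what Lemma \ref{estimate} supplies), after passing to a subsequence either $(y_{n})$ is uniformly integrable in $L^{p}(\mu_{2})$, or it is almost disjointly supported and hence equivalent to the $\ell^{p}$ basis, giving $\|\sum a_{n}y_{n}\|\leq C(\sum|a_{n}|^{p})^{1/p}$. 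The second alternative is impossible: combined with the $\ell^{2}$ lower bound it would force $c'N^{1/2}\leq CN^{1/p}$ for every $N$, which fails for large $N$ because $1/2>1/p$. Thus we may assume $(y_{n})$ is uniformly integrable.

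Now I would invoke the mutual singularity exactly as in the proof of Lemma \ref{singular}. Choose a Borel set $A$ with $\mu_{1}(A)=0$ and $\mu_{2}(A^{c})=0$, and by regularity an open $G\supseteq A$ with $\mu_{1}(G)<\epsilon$; then $\mu_{2}(G^{c})=0$. Using the uniform integrability of $(y_{n})$ and regularity of $\mu_{2}$, select a \emph{single} compact $F\subseteq G$ with $\mu_{2}(G\setminus F)$ so small that $\sup_{n}\|P^{\mu_{2}}_{F^{c}}y_{n}\|<\epsilon$; this is possible precisely because $\int_{G^{c}}|y_{n}|^{p}\,d\mu_{2}=0$. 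The crucial point is that $F$ is now independent of $n$, so Lemma \ref{local}, applied with the compact set $F$ and the open set $G$ (note $F\subseteq G$), shows that $P^{\mu_{2}}_{F}LP^{\mu_{1}}_{G^{c}}$ is compact. Splitting $r_{n}=P^{\mu_{1}}_{G^{c}}r_{n}+P^{\mu_{1}}_{G}r_{n}$, the second piece is uniformly small since $\|P^{\mu_{1}}_{G}r_{n}\|=\mu_{1}(G)^{1/p}<\epsilon^{1/p}$, while $(P^{\mu_{1}}_{G^{c}}r_{n})$ is weakly null, so the compact operator sends it to norm zero, i.e.\ $\|P^{\mu_{2}}_{F}LP^{\mu_{1}}_{G^{c}}r_{n}\|\to 0$.

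Assembling the pieces, $\|y_{n}\|\leq\|P^{\mu_{2}}_{F^{c}}y_{n}\|+\|P^{\mu_{2}}_{F}LP^{\mu_{1}}_{G^{c}}r_{n}\|+\|P^{\mu_{2}}_{F}LP^{\mu_{1}}_{G}r_{n}\|\leq\epsilon+o(1)+M\epsilon^{1/p}$, whence $\limsup_{n}\|Lr_{n}\|\leq\epsilon+M\epsilon^{1/p}$. As $\epsilon$ is arbitrary, $\liminf_{n}\|Lr_{n}\|=0$, contradicting $\|Lr_{n}\|\geq c\|r_{n}\|=c\,\mu_{1}(\mathbb{C})^{1/p}>0$. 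I expect the main obstacle to be the dichotomy step: ruling out escaping mass for $(y_{n})$ and thereby obtaining a \emph{uniform} truncation set $F$. This is the only place $p>2$ is used, and it is exactly what makes the compactness coming from Lemma \ref{local} usable; the remaining steps are soft, combining weak nullity, compactness, and the uniform integrability already secured.
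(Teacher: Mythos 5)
There is a genuine gap at the dichotomy step, and it is fatal to the structure of your argument. The Kadec--Pe\l{}czy\'nski phenomenon does not give the alternative ``either $(y_{n})$ is $L^{p}$-equi-integrable or a subsequence is almost disjointly supported.'' What one actually gets (the subsequence splitting principle) is a decomposition $y_{n}=u_{n}+v_{n}$, after passing to a subsequence, with $(u_{n})$ equi-integrable and $(v_{n})$ pairwise disjointly supported; the disjoint part need not be small. Your use of the $l^{2}$ lower bound only excludes the case where the \emph{whole} sequence is almost disjoint; it does not exclude the mixture. Indeed, for $p>2$ a mixture $y_{n}=u_{n}+v_{n}$ with $(u_{n})$ equivalent to the $l^{2}$ basis and $(v_{n})$ normalized in $L^{p}(\mu_{2})$ and disjointly supported still satisfies your lower estimate: any normalized disjointly supported sequence has $\|v_{n}\|_{L^{2}(\mu_{2})}\leq\mu_{2}(\mathrm{supp}\,v_{n})^{\frac{1}{2}-\frac{1}{p}}\to 0$, so the disjoint part is invisible to the $L^{2}$-based lower bound, yet it is exactly the part whose $L^{p}$ mass can escape every compact subset of $G$ (marching along an exhaustion of $G$ by compacts). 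Hence ``not equivalent to the $l^{p}$ basis'' does not yield uniform integrability, the selection of a single compact $F$ with $\sup_{n}\|P_{F^{c}}^{\mu_{2}}y_{n}\|<\epsilon$ is unjustified, and the concluding inequality $\limsup_{n}\|Lr_{n}\|\leq\epsilon+M\epsilon^{1/p}$ collapses: an escaping component can keep $\|y_{n}\|$ bounded below even though, for each \emph{fixed} compact $F\subset G$, $\|P_{F}^{\mu_{2}}y_{n}\|$ is eventually small --- which is all that your tools (Lemma \ref{local} plus weak nullity) actually prove.

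The paper's proof is built precisely around this obstruction: it never attempts to show that the individual norms $\|Lf_{n}\|$ tend to zero. Instead it runs a gliding-hump induction using Lemmas \ref{singular} and \ref{local} and weak nullity, choosing compacts $D_{k}$, open sets $E_{k}$ and indices $n_{k}$ adaptively so that the images $Lg_{k}$, where $g_{k}=P_{E_{k}^{c}}^{\mu_{1}}f_{n_{k}}$, are (up to $\epsilon$) supported on the disjoint sets $D_{k+1}\setminus D_{k}$. Sums of images then obey an $l^{p}$-type upper bound $\|Lg_{1}+\ldots+Lg_{k}\|^{p}\leq\|L\|^{p}k+C\epsilon$ (via Lemma \ref{estimate}), while sums of inputs obey the Rademacher lower bound $\|g_{1}+\ldots+g_{k}\|\geq\sqrt{k}-k^{1+\frac{1}{p}}\epsilon^{\frac{1}{p}}$, and $\frac{1}{2}>\frac{1}{p}$ gives the contradiction for large $k$. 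In other words, the correct replacement for your ``uniformly integrable'' branch is the opposite one: use the fact that $P_{F}^{\mu_{2}}Lg_{n}$ is asymptotically small for every fixed compact $F\subset G$ to extract an almost disjointly supported subsequence of \emph{images}, and contradict the $l^{2}$ lower bound on \emph{sums} rather than on individual terms. Your ingredients (Rademachers, Lemmas \ref{singular} and \ref{local}, weak nullity, the exponent comparison) are the right ones, but the dichotomy shortcut does not close the argument.
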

\begin{proof}
Since $\mu_{1}$ is purely nonatomic and nontrivial, there exist independent $f_{1},f_{2},\ldots\in L^{p}(\mu_{1})$ taking $\pm 1$ values.

Let $0<\epsilon<1$. Let $D_{1}=E_{1}=\emptyset$ and $n_{1}=1$. Suppose that $D_{1}\subset\ldots\subset D_{k}$, $E_{1}\subset\ldots\subset E_{k}$, $n_{1}<n_{2}<\ldots<n_{k}$ have been chosen so that $D_{i}$ is compact, $E_{i}$ is open and $D_{i}\subset E_{i}$ for all $i=1,\ldots,k$. By Lemma \ref{singular}, there exist subsets $F\subset G\subset\mathbb{C}$ such that $F$ is compact, $G$ is open, $\mu_{1}(G)<\epsilon$ and
\begin{equation}\label{support}
\|P_{F^{c}}^{\mu_{2}}LP_{E_{k}^{c}}^{\mu_{1}}f_{n_{k}}\|_{L^{p}(\mu_{2})}<\epsilon.
\end{equation}
Take $D_{k+1}=D_{k}\cup F$ and $E_{k+1}=E_{k}\cup G$. By Lemma \ref{local}, $P_{D_{k+1}}^{\mu_{2}}LP_{E_{k+1}^{c}}^{\mu_{1}}$ is compact. Since $f_{n}\to 0$ weakly, there exists $n_{k+1}>n_{k}$ such that
\begin{equation}\label{local3}
\|P_{D_{k+1}}^{\mu_{2}}LP_{E_{k+1}^{c}}^{\mu_{1}}f_{n_{k+1}}\|_{L^{p}(\mu_{2})}<\epsilon.
\end{equation}
Thus $D_{k+1},E_{k+1},n_{k+1}$ are defined.

By (\ref{support}),
\begin{equation}\label{support2}
\|P_{D_{k+1}^{c}}^{\mu_{2}}LP_{E_{k}^{c}}^{\mu_{1}}f_{n_{k}}\|_{L^{p}(\mu_{2})}<\epsilon,\quad k\geq 1.
\end{equation}
For each $k\geq 1$, let $g_{k}=P_{E_{k}^{c}}^{\mu_{1}}f_{n_{k}}$. By (\ref{local3}) and (\ref{support2}), for every integers $1\leq j<k$, we have
\[\|P_{D_{k}}^{\mu_{2}}Lg_{k}\|_{L^{p}(\mu_{2})}<\epsilon,\]
and
\[\|P_{D_{k}^{c}}^{\mu_{2}}Lg_{j}\|_{L^{p}(\mu_{2})}\leq\|P_{D_{j+1}^{c}}^{\mu_{2}}Lg_{j}\|_{L^{p}(\mu_{2})}<\epsilon.\]
So
\begin{eqnarray*}
\|Lg_{1}+\ldots+Lg_{k}\|_{L^{p}(\mu_{2})}&<&k\epsilon+\|P_{D_{k}}^{\mu_{2}}(Lg_{1}+\ldots+Lg_{k-1})+P_{D_{k}^{c}}^{\mu_{2}}Lg_{k}\|_{L^{p}(\mu_{2})}
\\&\leq&k\epsilon+(\|Lg_{1}+\ldots+Lg_{k-1}\|_{L^{p}(\mu_{2})}^{p}+\|Lg_{k}\|_{L^{p}(\mu_{2})}^{p})^{\frac{1}{p}}.
\end{eqnarray*}
So by Lemma \ref{estimate},
\[\|Lg_{1}+\ldots+Lg_{k}\|_{L^{p}(\mu_{2})}^{p}\leq
\|Lg_{1}+\ldots+Lg_{k-1}\|_{L^{p}(\mu_{2})}^{p}+\|Lg_{k}\|_{L^{p}(\mu_{2})}^{p}+C(p,k,\|L\|)\epsilon,\]
where $C(p,k,\|L\|)>0$ depends only on $p,k,\|L\|$ but not on $\epsilon$. By induction, we obtain
\[\|Lg_{1}+\ldots+Lg_{k}\|_{L^{p}(\mu_{2})}^{p}\leq\|Lg_{1}\|_{L^{p}(\mu_{2})}^{p}+\ldots+\|Lg_{k}\|_{L^{p}(\mu_{2})}^{p}+C(p,k,\|L\|)\epsilon.\]
So
\begin{equation}\label{upper}
\|Lg_{1}+\ldots+Lg_{k}\|_{L^{p}(\mu_{2})}^{p}\leq\|L\|^{p}k+C(p,k,\|L\|)\epsilon.
\end{equation}
On the other hand,
\[\|g_{1}+\ldots+g_{k}\|_{L^{p}(\mu_{1})}\geq\|f_{1}+\ldots+f_{k}\|_{L^{p}(\mu_{1})}-
\|P_{E_{1}}^{\mu_{1}}f_{1}+\ldots+P_{E_{k}}^{\mu_{1}}f_{k}\|_{L^{p}(\mu_{1})}.\]
By construction, $\mu_{1}(E_{j})<j\epsilon$ for all $j\geq 1$. Since $f_{1},\ldots,f_{k}$ take $\pm 1$ values,
\[\|P_{E_{1}}^{\mu_{1}}f_{1}+\ldots+P_{E_{k}}^{\mu_{1}}f_{k}\|_{L^{p}(\mu_{1})}\leq k^{1+\frac{1}{p}}\epsilon^{\frac{1}{p}}.\]
Therefore,
\begin{eqnarray*}
\|g_{1}+\ldots+g_{k}\|_{L^{p}(\mu_{1})}&\geq&\|f_{1}+\ldots+f_{k}\|_{L^{p}(\mu_{1})}-k^{1+\frac{1}{p}}\epsilon^{\frac{1}{p}}\\&\geq&
\|f_{1}+\ldots+f_{k}\|_{L^{2}(\mu_{1})}-k^{1+\frac{1}{p}}\epsilon^{\frac{1}{p}}=\sqrt{k}-k^{1+\frac{1}{p}}\epsilon^{\frac{1}{p}}.
\end{eqnarray*}
Thus, in view of (\ref{upper}), $L$ is not bounded below.
\end{proof}
\begin{lemma}\label{diagembed}
Let $\mu$ be a purely nonatomic measure on $\mathbb{C}$ with compact support. Let $D$ be a diagonal operator on $l^{p}$. Assume that each entry of $D$ is in the support of $\mu$. Then for every $\epsilon>0$, there exist $L:l^{p}\to L^{p}(\mu)$ and $R:L^{p}(\mu)\to l^{p}$ such that
\begin{enumerate}[(a)]
\item $\|L\|<1+\epsilon$ and $\|R\|<1+\epsilon$;
\item $M_{\mu}L-LD$ is compact and has norm at most $\epsilon$;
\item $RM_{\mu}-DR$ is compact and has norm at most $\epsilon$; and
\item $RL-I$ is compact and has norm at most $\epsilon$.
\end{enumerate}
\end{lemma}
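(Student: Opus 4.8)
The plan is to realize $l^{p}$ inside $L^{p}(\mu)$ by disjointly supported normalized indicators placed right next to the diagonal entries, so that $M_{\mu}$ acts on each of them almost like multiplication by the corresponding entry. Write the entries of $D$ as $a_{1},a_{2},\ldots$, all lying in $\mathrm{supp}(\mu)$. First I would fix radii $r_{n}$ with $r_{n}\downarrow 0$ and $\sup_{n}r_{n}\leq\sqrt{2}\,\epsilon$ (e.g.\ $r_{n}=\min(\sqrt{2}\,\epsilon,1/n)$) and produce \emph{pairwise disjoint} Borel sets $B_{n}$ with $\mu(B_{n})>0$ and $B_{n}\subseteq S(a_{n},r_{n})$. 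Granting this, put $u_{n}=I(B_{n})/\mu(B_{n})^{1/p}$ and define $Le_{n}=u_{n}$, together with the biorthogonal functionals $\phi_{n}(f)=\mu(B_{n})^{-1/q}\int_{B_{n}}f\,d\mu$ (where $\frac1p+\frac1q=1$) and $Rf=\sum_{n}\phi_{n}(f)e_{n}$.

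Since the $B_{n}$ are disjoint, for $x=\sum c_{n}e_{n}$ one has $\|Lx\|_{L^{p}(\mu)}^{p}=\sum_{n}|c_{n}|^{p}\int_{B_{n}}|u_{n}|^{p}\,d\mu=\sum_{n}|c_{n}|^{p}$, so $L$ is isometric and $\|L\|=1<1+\epsilon$. By Hölder, $|\phi_{n}(f)|\leq\|fI(B_{n})\|_{L^{p}(\mu)}$, whence $\|Rf\|_{l^{p}}^{p}=\sum_{n}|\phi_{n}(f)|^{p}\leq\sum_{n}\int_{B_{n}}|f|^{p}\,d\mu\leq\|f\|_{L^{p}(\mu)}^{p}$, giving $\|R\|\leq 1<1+\epsilon$; this settles (a). For (d) note that disjointness gives $\phi_{n}(u_{m})=\delta_{nm}$, so $RLe_{n}=e_{n}$ and $RL-I=0$, which is trivially compact of norm $0<\epsilon$.

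For (b) and (c) the point is that the two commutators are ``localized multiplication by $z-a_{n}$.'' Indeed $(M_{\mu}L-LD)e_{n}=(z-a_{n})u_{n}=:w_{n}$ is supported on $B_{n}$ with $\|w_{n}\|\leq\sup_{z\in B_{n}}|z-a_{n}|\leq r_{n}/\sqrt{2}$, and since the $w_{n}$ are disjointly supported, $\|(M_{\mu}L-LD)x\|^{p}=\sum_{n}|c_{n}|^{p}\|w_{n}\|^{p}$, giving operator norm $\leq\sup_{n}r_{n}/\sqrt{2}\leq\epsilon$; truncating to $n\leq N$ yields a finite-rank operator within $\sup_{n>N}\|w_{n}\|\to 0$ of $M_{\mu}L-LD$, so the commutator is compact. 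Symmetrically, $(RM_{\mu}-DR)f=\sum_{n}\phi_{n}((z-a_{n})f)e_{n}$, and the coefficient functionals $f\mapsto\phi_{n}((z-a_{n})f)$ have norm $\leq r_{n}/\sqrt{2}$ by the same Hölder estimate, so the identical bound and finite-rank truncation give (c).

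The hard part will be the purely measure-theoretic construction of the disjoint sets $B_{n}$: the danger is that after removing $B_{1},\ldots,B_{n-1}$ there is no $\mu$-mass left in a small square about $a_{n}$. I would resolve this by maintaining the invariant that $\mu$ restricted to the complement of the used region $U_{n-1}=B_{1}\cup\cdots\cup B_{n-1}$ still has full support $\mathrm{supp}(\mu)$; then $\mu(S(a_{n},r_{n})\setminus U_{n-1})>0$ automatically, and I may choose $B_{n}\subseteq S(a_{n},r_{n})\setminus U_{n-1}$ of positive measure. To preserve the invariant, I fix a countable base of relatively open sets $V_{i}$ of $\mathrm{supp}(\mu)$ (each of positive measure), and before extracting $B_{n}$ I reserve pairwise disjoint positive-measure subsets $P_{i}\subseteq V_{i}$ of small total mass (here nonatomicity is used to split off subsets of prescribed small positive measure), choosing $B_{n}$ off $\bigcup_{i}P_{i}$; since no $V_{i}$ can be annihilated, the complement retains full support. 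Nonatomicity (for the splitting) and regularity of $\mu$ are what make this bookkeeping go through, and I expect it to be the only genuinely delicate step.
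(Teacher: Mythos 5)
Your operator-theoretic work is fine and is essentially the paper's own construction: the paper defines the same $L$ and $R$ (indicators of small sets near the entries, paired with the functionals $f\mapsto\|I(A)\|_{L^{q}(\mu)}^{-1}\int_{A}f\,d\mu$), and granted pairwise disjoint positive-measure sets $B_{n}\subseteq S(a_{n},r_{n})$, your verification of (a)--(d) is correct, and indeed cleaner than the paper's since disjointness makes $L$ isometric and $RL=I$ exact. The genuine gap is exactly where you flagged it, and your proposed resolution does not close it, for two reasons. First, producing pairwise disjoint positive-measure reservations $P_{i}\subseteq V_{i}$, one inside each member of a countable base, is not accomplished by ``splitting off subsets of prescribed small positive measure'': it is the very same disjoint-refinement problem you set out to solve, because the obstruction is containment, not mass --- a previously reserved set of tiny measure can still contain a later, even smaller $V_{j}$ up to a $\mu$-null set, after which nothing in $V_{j}$ remains to reserve. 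Second, even granted the $P_{i}$, your invariant controls only $\mu(S(a_{n},r_{n})\setminus U_{n-1})$, whereas $B_{n}$ must avoid the \emph{infinite} union $\bigcup_{i}P_{i}$ as well; once $r_{n}$ is small enough that $\mu(S(a_{n},r_{n}))$ falls below the total reserved mass, nothing prevents $U_{n-1}\cup\bigcup_{i}P_{i}$ from having full measure in that square. Protecting against this would require reserving further sets inside each $V_{i}\setminus\bigcup_{j}P_{j}$, and so on --- an infinite regress.

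The construction can be repaired, but it needs a different mechanism. One clean fix: keep every chosen set $B_{n}$ compact and \emph{nowhere dense relative to} $\mathrm{supp}(\mu)$. Inside any nonempty relatively open subset $W$ of $\mathrm{supp}(\mu)$ there is such a set of positive measure (cover a countable dense subset of $W$ by balls of total measure less than $\mu(W)/2$, using nonatomicity, and take a compact positive-measure subset of the complement, using regularity). Then $U_{n-1}$ is always a \emph{finite} union of closed nowhere dense sets, so it cannot swallow the nonempty relatively open set $\{z\in\mathrm{supp}(\mu):|z-a_{n}|<r_{n}/2\}$, which therefore retains positive measure --- no reservations are needed at all. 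It is also worth noting that the paper sidesteps the whole issue: it takes $A_{n}=\{z:|z-\lambda_{n}|<\epsilon_{n}\}$ centered at the entries, so $\mu(A_{n})>0$ for free, gives up disjointness, and instead controls all overlaps (so that (a)--(d) hold approximately, which is all the lemma asserts) by letting $\epsilon_{n}\to 0$ fast enough; your route buys exactness of (a) and (d) but at the price of the measure-theoretic lemma you left unproved.
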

\begin{proof}
Let $\lambda_{1},\lambda_{2},\ldots$ be the entries of $D$. For $\lambda\in\mathbb{C}$ and $\epsilon>0$, let $A_{\lambda,\epsilon}=\{z\in\mathbb{C}:|z-\lambda|<\epsilon\}$. Define
\[Lx=\sum_{n=1}^{\infty}x_{n}\frac{I(A_{\lambda_{n},\epsilon_{n}})}{\|I(A_{\lambda_{n},\epsilon_{n}})\|_{L^{p}(\mu)}},\quad x=(x_{1},x_{2},\ldots)\in l^{p}.\]
\[e_{n}^{*}(Rf)=\frac{1}{\|I(A_{\lambda_{n},\epsilon_{n}})\|_{L^{q}(\mu)}}\int_{A_{\lambda_{n},\epsilon_{n}}}f\,d\mu,\quad f\in L^{p}(\mu),\,n\geq 1,\]
where $e_{1}^{*},e_{2}^{*},\ldots$ are the coordinate functionals on $l^{p}$ and $\displaystyle\frac{1}{p}+\frac{1}{q}=1$. If $\epsilon_{n}\to 0$ fast enough, $L$ and $R$ satisfy the required conditions.
\end{proof}
\begin{lemma}\label{atomabsorb}
Let $\mu$ be a purely nonatomic measure on $\mathbb{C}$ with compact support. Let $D$ be a diagonal operator on $l^{p}$ such that each entry is in the support of $\mu$. Then $M_{\mu}$ is approximately similar to $M_{\mu}\oplus D$.
\end{lemma}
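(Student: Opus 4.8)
The plan is to realize inside $(L^{p}(\mu),M_{\mu})$ infinitely many approximate copies of $(l^{p},D)$, so that prepending one further external copy can be absorbed by a shift. Write $D^{(\infty)}$ for the diagonal operator $D\oplus D\oplus\cdots$ acting on $l^{p}(l^{p})$. Under the canonical isometric identification $l^{p}(l^{p})\cong l^{p}$, $D^{(\infty)}$ is a diagonal operator on $l^{p}$ whose entries are those of $D$, each repeated infinitely often; in particular they all lie in $\operatorname{supp}(\mu)$. Hence Lemma \ref{diagembed} applies to $D^{(\infty)}$ and produces, for each $\epsilon>0$, operators $\widetilde{L}:l^{p}(l^{p})\to L^{p}(\mu)$ and $\widetilde{R}:L^{p}(\mu)\to l^{p}(l^{p})$ with $\|\widetilde{L}\|,\|\widetilde{R}\|<1+\epsilon$ such that $M_{\mu}\widetilde{L}-\widetilde{L}D^{(\infty)}$, $\widetilde{R}M_{\mu}-D^{(\infty)}\widetilde{R}$ and $\widetilde{R}\widetilde{L}-I$ are all compact of norm at most $\epsilon$.

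The shift is encoded by the ``prepend'' map $s:l^{p}\oplus l^{p}(l^{p})\to l^{p}(l^{p})$, $s(x,(y_{1},y_{2},\dots))=(x,y_{1},y_{2},\dots)$, an isometric isomorphism satisfying the exact intertwining $D^{(\infty)}s=s(D\oplus D^{(\infty)})$. I would then define $W_{\epsilon}:L^{p}(\mu)\oplus l^{p}\to L^{p}(\mu)$ by
\[W_{\epsilon}(f,x)=(I-\widetilde{L}\widetilde{R})f+\widetilde{L}\,s(x,\widetilde{R}f),\]
the idea being to leave the part $(I-\widetilde{L}\widetilde{R})f$ complementary to the embedded copies untouched, while replacing the ``coordinates'' $\widetilde{R}f$ of $f$ by $(x,\widetilde{R}f)$ and re-embedding. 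The first task is to show that $W_{\epsilon}$ is an isomorphism with $\|W_{\epsilon}\|$ and $\|W_{\epsilon}^{-1}\|$ bounded by a constant independent of $\epsilon$. For this I would write down the natural candidate inverse $V_{\epsilon}g=\big((I-\widetilde{L}\widetilde{R})g+\widetilde{L}\,w(g),\,x(g)\big)$, where $(x(g),w(g))=s^{-1}\widetilde{R}g\in l^{p}\oplus l^{p}(l^{p})$, and verify, using $\widetilde{R}\widetilde{L}\approx I$, the identity $(I-\widetilde{L}\widetilde{R})\widetilde{L}=-\widetilde{L}(\widetilde{R}\widetilde{L}-I)\approx0$, and the fact that $s$ is an exact isometry, that $V_{\epsilon}W_{\epsilon}-I$ and $W_{\epsilon}V_{\epsilon}-I$ are compact of norm $O(\epsilon)<1$. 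This uniform-invertibility step is the main obstacle: every approximation in play is valid only modulo a compact operator of norm $O(\epsilon)$, and one must bookkeep these errors carefully to conclude that $W_{\epsilon}^{-1}$ exists and is uniformly bounded as $\epsilon\to0$.

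Granting that, the intertwining is a direct computation. Splitting
\[M_{\mu}W_{\epsilon}(f,x)-W_{\epsilon}(M_{\mu}\oplus D)(f,x)\]
into its $(I-\widetilde{L}\widetilde{R})$-part and its $\widetilde{L}\,s$-part, the first part equals $\big(M_{\mu}(I-\widetilde{L}\widetilde{R})-(I-\widetilde{L}\widetilde{R})M_{\mu}\big)f$, which is compact of norm $O(\epsilon)$ because $M_{\mu}(I-\widetilde{L}\widetilde{R})-(I-\widetilde{L}\widetilde{R})M_{\mu}=-(M_{\mu}\widetilde{L}-\widetilde{L}D^{(\infty)})\widetilde{R}-\widetilde{L}(D^{(\infty)}\widetilde{R}-\widetilde{R}M_{\mu})$. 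For the second part I would replace $M_{\mu}\widetilde{L}$ by $\widetilde{L}D^{(\infty)}$ (error compact, $O(\epsilon)$), pull $D^{(\infty)}$ through $s$ via $D^{(\infty)}s=s(D\oplus D^{(\infty)})$, and thereby reduce it to $\widetilde{L}\,s\big(0,(D^{(\infty)}\widetilde{R}-\widetilde{R}M_{\mu})f\big)$, again compact of norm $O(\epsilon)$. Hence $M_{\mu}W_{\epsilon}-W_{\epsilon}(M_{\mu}\oplus D)$ is compact with norm $O(\epsilon)$.

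Finally, choosing $\epsilon=\epsilon_{n}\to0$ and setting $W_{n}=W_{\epsilon_{n}}$ gives isomorphisms from $L^{p}(\mu)\oplus l^{p}$ onto $L^{p}(\mu)$ with $\sup_{n}\|W_{n}\|\|W_{n}^{-1}\|<\infty$ and with $M_{\mu}-W_{n}(M_{\mu}\oplus D)W_{n}^{-1}=\big(M_{\mu}W_{n}-W_{n}(M_{\mu}\oplus D)\big)W_{n}^{-1}$ compact of norm tending to $0$. By definition this shows that $M_{\mu}$ is approximately similar to $M_{\mu}\oplus D$. Note that no separate Eilenberg-swindle step is needed: the absorption of the single extra copy is performed directly by the shift $s$, once infinitely many internal copies of $D$ are made available by applying Lemma \ref{diagembed} to $D^{(\infty)}$ rather than to $D$.
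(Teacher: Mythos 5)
Your proposal is correct and takes essentially the same route as the paper: both apply Lemma \ref{diagembed} to the infinite sum $D\oplus D\oplus\cdots$ and then absorb the single extra copy of $D$ via the identification $D^{(\infty)}\cong D\oplus D^{(\infty)}$. The only difference is presentational --- the paper passes from the approximate embedding to an honest similarity $M_{\mu}+K\sim T\oplus D\oplus D\oplus\cdots$ by a cited routine argument and then applies the swindle identity, whereas you fuse these two steps into the single explicit isomorphism $W_{\epsilon}$; note that your prepend map $s$ \emph{is} exactly the Eilenberg-swindle step you claim to have avoided.
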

\begin{proof}
Observe that $D\oplus D\oplus\ldots$ (which is an operator acting on $(l^{p}\oplus l^{p}\oplus)_{l^{p}}$) satisfy the same conditions as $D$. By Lemma \ref{diagembed} and a routine argument (see, e.g., \cite[Lemma 2.7]{Boedihardjo}), there is a constant $C>0$ such that for every $\epsilon>0$, there exist a compact operator $K$ on $L^{p}(\mu)$, a Banach space $Y$ and $T\in B(Y)$ such that $\|K\|<\epsilon$ and $M_{\mu}+K$ is $C$-similar to $T\oplus D\oplus D\oplus\ldots$. But
\[T\oplus D\oplus D\oplus\ldots=(T\oplus D\oplus D\oplus\ldots)\oplus D\]
is $C$-similar to $(M_{\mu}+K)\oplus D$. Therefore, $M_{\mu}$ is approximately similar to $M_{\mu}\oplus D$.
\end{proof}
The proof of the following lemma is routine and thus is skipped.
\begin{lemma}\label{outlier}
Let $D_{1}$ and $D_{2}$ be diagonal operators on $l^{p}$ such that the set of entries of $D_{1}$ and the set of entries of $D_{2}$ have the same cluster points. Assume that for each $i=1,2$, all entries of $D_{i}$ are distinct. Then there exists a compact operator $K$ on $l^{p}$ such that $D_{1}$ is similar to $D_{2}+K$.
\end{lemma}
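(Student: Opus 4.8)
The plan is to realize the similarity by a permutation of the standard basis of $l^{p}$, corrected by a finite-rank operator, after suitably matching the two sequences of entries near their common cluster set. Write $D_{1}e_{n}=a_{n}e_{n}$ and $D_{2}e_{n}=b_{n}e_{n}$, and let $C$ be the common set of cluster points of $\{a_{n}\}$ and $\{b_{n}\}$; since $D_{1},D_{2}\in B(l^{p})$ the entries are bounded, so $C$ is compact (if the index set is finite the statement is trivial). I would first record two cheap reductions. First, similarity modulo compact operators is an equivalence relation that is insensitive to finite-rank changes of either operator: if $W\tilde D_{1}W^{-1}=\tilde D_{2}+K_{0}$ with $K_{0}$ compact and $\tilde D_{i}=D_{i}+F_{i}$ for finite-rank diagonal $F_{i}$, then $WD_{1}W^{-1}=D_{2}+\big(F_{2}+K_{0}-WF_{1}W^{-1}\big)$, and the parenthesized correction is compact. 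Hence I am free to alter finitely many entries of $D_{1}$ and of $D_{2}$ at no cost. Second, for every $\delta>0$ only finitely many entries of either sequence lie outside the $\delta$-neighborhood of $C$, since a bounded sequence of distinct values outside it would accumulate at a point at distance $\ge\delta$ from $C$, which is absurd; and, because the entries are distinct, every point of $C$ is the limit of infinitely many entries of each sequence.

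The heart of the argument is the matching claim: there is a bijection $\sigma\colon\mathbb{N}\to\mathbb{N}$ such that $|a_{\sigma(n)}-b_{n}|\to 0$ as $n\to\infty$, off a finite set of $n$. Granting this, let $W$ be the (isometric, invertible) permutation operator associated with $\sigma$, chosen so that $WD_{1}W^{-1}$ is the diagonal operator whose $n$-th entry is $a_{\sigma(n)}$. Then $WD_{1}W^{-1}-D_{2}$ is the diagonal operator with entries $a_{\sigma(n)}-b_{n}$; these tend to $0$ except on a finite set, so this difference is the sum of a finite-rank operator and a diagonal operator with entries tending to $0$, hence compact. Setting $K=WD_{1}W^{-1}-D_{2}$ proves that $D_{1}$ is similar to $D_{2}+K$.

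It remains to construct $\sigma$, which I would do by a back-and-forth argument. Fix $\delta_{0}>0$ and set aside the finitely many ``far'' entries (those at distance $\ge\delta_{0}$ from $C$) on each side; by the first reduction these may be matched arbitrarily at the end, contributing only finite rank. For the remaining ``near'' entries I would alternately (i) take the least-indexed so-far-unmatched $b_{n}$ and pair it with an unused $a$-entry lying within, say, twice $\mathrm{dist}(b_{n},C)$ of the point of $C$ nearest $b_{n}$, and (ii) symmetrically pair the least-indexed unused $a$-entry with a nearby unused $b$-entry. Since every point of $C$ is approached by infinitely many entries of each sequence while only finitely many have been used at any stage, a suitable unused partner always exists, and because the relevant distances to $C$ tend to $0$ the matching errors tend to $0$. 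The main obstacle is precisely this step when $C$ is uncountable (for instance an interval), where one cannot partition the index set into clusters around isolated limit points; the back-and-forth must keep each matched pair geometrically close while still exhausting both index sets, and the bookkeeping for the finitely many far outliers, disposed of by the finite-rank reduction, must be kept consistent with the bijection.
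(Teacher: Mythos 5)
The paper gives no proof to compare against (it dismisses this lemma as routine), so your proposal must be judged on its own; and your overall strategy --- match the two diagonal sequences by a bijection $\sigma$ with $|a_{\sigma(n)}-b_{n}|\to 0$ off a finite set, conjugate by the isometric permutation operator $W$, and observe that $WD_{1}W^{-1}-D_{2}$ is then a finite-rank perturbation of a diagonal operator with entries tending to $0$, hence compact --- is exactly the routine argument the paper is alluding to. Your two preliminary reductions are also sound: finite-rank changes of either operator are harmless, only finitely many entries lie at distance $\geq\delta$ from $C$ (this genuinely uses distinctness, via Bolzano--Weierstrass applied to the infinite \emph{set} of values), and every point of $C$ is the limit of infinitely many entries of each sequence.

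There is, however, one step that fails as written, and it fails precisely in the situation where the paper applies the lemma. Your matching rule pairs the current $b_{n}$ with an unused $a$-entry lying within $2\,\mathrm{dist}(b_{n},C)$ of the point of $C$ nearest to $b_{n}$. If $b_{n}\in C$ this tolerance is $0$, so you are demanding an unused $a$-entry \emph{exactly equal} to $b_{n}$; the $a$'s are distinct from one another but need not meet the $b$'s at all, so no such partner need exist. This is not a corner case: in the proof of Theorem \ref{main2} the lemma is applied to diagonal operators whose entries are dense in $\mathrm{supp}(\mu_{i})$, and the support of a purely nonatomic measure is perfect, so there \emph{every} entry lies in $C$. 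The repair is one line: at round $k$ of your back-and-forth, allow tolerance $2^{-k}$ around the nearest point of $C$ (an unused partner exists because infinitely many $a$-entries lie within $2^{-k}$ of that point and only finitely many have been used), giving error at most $\mathrm{dist}(b_{n},C)+2\cdot 2^{-k}$. Since $\mathrm{dist}(b_{n},C)\to 0$, and since the round in which a given index is matched tends to infinity with the index (each round consumes at least one new index on each side), the errors still vanish. Note also that with this fix the ``main obstacle'' you flag at the end dissolves: no partition of indices into clusters around isolated limit points is needed, because the per-round vanishing-tolerance argument is completely indifferent to the structure of $C$, countable or not.
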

\begin{proof}[Proof of Theorem \ref{main2}]
Assume (i). For $p>2$, by the Lebesgue-Radon-Nikodym Theorem \cite[Theorem 6.10]{Rudin} and Lemma \ref{nonembed}, $\mu_{1}'$ and $\mu_{2}'$ are equivalent. For $p<2$, using duality, we also have that $\mu_{1}'$ and $\mu_{2}'$ are equivalent.

The essential spectra of $M_{\mu_{1}}$ and $M_{\mu_{2}}$ must coincide. It is easy to see that the essential spectrum of $M_{\mu_{i}}$ consists of the cluster points of $\mathrm{supp}(\mu_{i})$. So $\mathrm{supp}(\mu_{1})$ and $\mathrm{supp}(\mu_{2})$ have the same cluster points. Thus we obtain (ii).

Conversely assume (ii). Since $\mu_{1}'$ and $\mu_{2}'$ are equivalent, $M_{\mu_{1}'}$ and $M_{\mu_{2}'}$ are similar. For $i=1,2$, we write
\[M_{\mu_{i}}=M_{\mu_{i}'}\oplus T_{i},\]
where $T_{i}$ is a diagonal operator on a (finite or infinite dimensional) $l^{p}$ space whose entries are distinct and consist of atoms of $\mu_{i}$. So taking $\mu=\mu_{i}'$ in Lemma \ref{atomabsorb}, we find that $M_{\mu_{i}}$ is approximately similar to $M_{\mu_{i}'}\oplus D_{i}$, where $D_{i}$ is a diagonal operator on a $l^{p}$ space whose entries are distinct and dense in $\mathrm{supp}(\mu_{i})$. By Lemma \ref{outlier}, we obtain (i).
\end{proof}
\begin{theorem}
Let $\mu_{1}$ and $\mu_{2}$ be measures on $\mathbb{C}$ with compact supports. Let $\mu_{1}'$ and $\mu_{2}'$ be the purely nonatomic parts of $\mu_{1}$ and $\mu_{2}$. Then the following statements are equivalent.
\begin{enumerate}[(i)]
\item $M_{\mu_{1}}$ and $M_{\mu_{2}}$ are approximately similar.
\item $\mu_{1}'$ and $\mu_{2}'$ are equivalent and $\mathrm{supp}(\mu_{1})=\mathrm{supp}(\mu_{2})$.
\end{enumerate}
\end{theorem}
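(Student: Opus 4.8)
The plan is to prove the two implications separately, reusing the machinery built for Theorem \ref{main2} and upgrading Lemma \ref{outlier} from similarity modulo compact to approximate similarity. Throughout I will use, without further comment, that approximate similarity is an equivalence relation — for transitivity one composes the witnessing sequences, the similarity constants multiply and stay bounded, and conjugating a compact operator of vanishing norm by operators of uniformly bounded similarity constant preserves both compactness and the vanishing of the norm — and that it is stable under forming direct sums with a fixed common summand. The finite-support case is immediate: if $\mathrm{supp}(\mu_i)$ is finite then $L^p(\mu_i)$ is finite-dimensional, $\mu_i'=0$, and $M_{\mu_i}=\mathrm{diag}(z_1,\dots,z_r)$ over the (distinct) support points, so once either (i) or (ii) holds both reduce to equality of the supports. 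Hence I assume the supports are infinite.

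For (i) $\Rightarrow$ (ii), I would first note that approximate similarity forces $\sigma(M_{\mu_1})=\sigma(M_{\mu_2})$: since $W_nM_{\mu_1}W_n^{-1}\to M_{\mu_2}$ in norm while $\sigma(W_nM_{\mu_1}W_n^{-1})=\sigma(M_{\mu_1})$, upper semicontinuity of the spectrum gives $\sigma(M_{\mu_1})\subseteq U$ for every open $U\supseteq\sigma(M_{\mu_2})$, hence $\sigma(M_{\mu_1})\subseteq\sigma(M_{\mu_2})$, and conjugating by $W_n^{-1}$ gives the reverse inclusion. As $\sigma(M_{\mu_i})=\mathrm{supp}(\mu_i)$, this yields $\mathrm{supp}(\mu_1)=\mathrm{supp}(\mu_2)$. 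Second, any single $W_n$ witnesses that $M_{\mu_1}$ is similar to $M_{\mu_2}+K$ for a compact $K$, so the argument in the proof of Theorem \ref{main2} (via Lemma \ref{nonembed} and the Lebesgue--Radon--Nikodym theorem for $p>2$, and duality for $p<2$) applies verbatim to give $\mu_1'\sim\mu_2'$. This is (ii).

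For (ii) $\Rightarrow$ (i), exactly as in the proof of Theorem \ref{main2}, Lemma \ref{atomabsorb} gives $M_{\mu_i}$ approximately similar to $M_{\mu_i'}\oplus D_i$, where $D_i$ is a diagonal operator on an $l^p$ space with distinct entries dense in $K:=\mathrm{supp}(\mu_1)=\mathrm{supp}(\mu_2)$; and since $\mu_1'\sim\mu_2'$, the operators $M_{\mu_1'}$ and $M_{\mu_2'}$ are similar. By transitivity and stability under direct sums it then suffices to prove the approximate-similarity analogue of Lemma \ref{outlier}: \emph{two diagonal operators $D_1,D_2$ on $l^p$ with distinct entries and $\overline{\{\text{entries of }D_1\}}=\overline{\{\text{entries of }D_2\}}=K$ are approximately similar.} Granting this, one chains $M_{\mu_1}\sim M_{\mu_1'}\oplus D_1\simeq M_{\mu_2'}\oplus D_1\sim M_{\mu_2'}\oplus D_2\sim M_{\mu_2}$ (approximate similarities on the outer links, a genuine similarity in the middle) and is done.

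I expect the proof of that analogue to be the main obstacle. The plan is to realize each $W_n$ as an isometric coordinate permutation induced by a bijection $\beta_n$ of the index sets; then $D_2-W_nD_1W_n^{-1}$ is the diagonal operator with entries $\mu_k-\lambda_{\beta_n^{-1}(k)}$, which is compact of norm at most $\epsilon_n$ precisely when these displacements are bounded by $\epsilon_n$ and tend to $0$. So everything reduces to constructing, for each $\epsilon_n\to 0$, a bijection $\beta_n$ moving every entry by at most $\epsilon_n$ and by amounts tending to $0$ along the coordinate index. The two kinds of points of $K$ are handled differently: each isolated point of $K$ is, by density, an entry of both $D_1$ and $D_2$ of multiplicity one, and these are matched to one another exactly; every remaining entry is a limit point of $K$, hence a limit of the (dense) entries of the other operator, so it admits an infinite supply of partners within any tolerance. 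A back-and-forth construction with tolerances shrinking both below $\epsilon_n$ and to $0$ then produces $\beta_n$. The points requiring care are verifying Hall-type availability at each stage (guaranteed by the infinite local supply at limit points together with the exact pre-matching at isolated points) and arranging the bookkeeping so that the displacements decay along the coordinate index, which is exactly what makes each perturbation genuinely compact rather than merely bounded in norm.
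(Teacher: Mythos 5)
Your proposal is correct, and for the crucial direction (ii) $\Rightarrow$ (i) it takes a genuinely different---and in fact more careful---route than the paper. The paper does not reuse the decomposition from Theorem \ref{main2}: it applies Lemma \ref{atomabsorb} only to absorb the atoms lying in $\mathrm{supp}(\mu_i')$, writes $M_{\mu_i}=M_{\mu_i'}\oplus D_i$ where the entries of $D_i$ are the remaining atoms, and then concludes from $\mathrm{supp}(\mu_1)\setminus\mathrm{supp}(\mu_1')=\mathrm{supp}(\mu_2)\setminus\mathrm{supp}(\mu_2')$ that $D_1$ and $D_2$ are \emph{similar}. You instead repeat the decomposition used in the proof of Theorem \ref{main2} (entries of $D_i$ distinct and dense in the common support $K$) and replace Lemma \ref{outlier} by an approximate-similarity version proved by a back-and-forth permutation matching. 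What your extra lemma buys is precisely the step the paper elides: exact similarity of diagonal operators with distinct entries forces their entry sets (equivalently, their point spectra) to coincide, and that does not follow from (ii). For instance, if $\mu_1$ is purely atomic on the rationals of $[0,1]$ and $\mu_2$ is purely atomic on the dyadic rationals, then (ii) holds and the paper's $D_1,D_2$ are $M_{\mu_1},M_{\mu_2}$ themselves, which are not similar; they are, however, approximately similar, exactly by your matching argument (isolated points of $K$ matched exactly, limit points matched within shrinking tolerances, displacements tending to $0$ so that each perturbation is a compact diagonal of norm at most $\epsilon_n$). So your route, at the cost of one additional lemma, supplies the statement that the paper's final step actually needs. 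The direction (i) $\Rightarrow$ (ii) is essentially identical in both treatments (a single $W_n$ reduces matters to the argument of Theorem \ref{main2}, and equality of spectra gives equality of supports), and your separate handling of finite supports fills a small case formally excluded from Theorem \ref{main2}; note that, like the paper, your derivation of $\mu_1'\sim\mu_2'$ implicitly requires $p\neq 2$, a restriction the theorem inherits from the context of Section 3.
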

\begin{proof}
Assume (i). By Theorem \ref{main2}, $\mu_{1}'$ and $\mu_{2}'$ are equivalent. The spectra of $M_{\mu_{1}}$ and $M_{\mu_{2}}$ must coincide. Since the spectrum of $M_{\mu_{i}}$ is $\mathrm{supp}(\mu_{i})$ for $i=1,2$, it follows that $\mathrm{supp}(\mu_{1})=\mathrm{supp}(\mu_{2})$. Thus we obtain (ii).

Conversely assume (ii). Since $\mu_{1}'$ and $\mu_{2}'$ are equivalent, $M_{\mu_{1}'}$ and $M_{\mu_{2}'}$ are similar. In view of Lemma \ref{atomabsorb}, we may assume that $\mu_{i}$ has no atom in $\mathrm{supp}(\mu_{i}')$ for $i=1,2$. Thus $M_{\mu_{i}}$ is the direct sum of $M_{\mu_{i}'}$ and a diagonal operator $D_{i}$ on a finite or infinite dimensional $l^{p}$ space whose entries are distinct and consist of points in $\mathrm{supp}(\mu_{i})\backslash\mathrm{supp}(\mu_{i}')$.

Since $\mu_{1}'$ and $\mu_{2}'$ are equivalent and $\mathrm{supp}(\mu_{1})=\mathrm{supp}(\mu_{2})$, we have $\mathrm{supp}(\mu_{1})\backslash\mathrm{supp}(\mu_{1}')=\mathrm{supp}(\mu_{2})\backslash\mathrm{supp}(\mu_{2}')$. So $D_{1}$ and $D_{2}$ are similar. Therefore $M_{\mu_{1}}$ and $M_{\mu_{2}}$ are approximately similar.
\end{proof}
\section{Open problems}
\begin{problem}
Let $1<p<\infty$, $p\neq 2$. Let $\mu$ be the Lebesgue measure on $[0,1]$. Does there exist a compact operator $K$ on $L^{p}(\mu)$ such that $M_{\mu}\oplus M_{\mu}$ is similar to $M_{\mu}+K$?
\end{problem}
This problem has an affirmative answer when $p=2$ \cite[Corollary 2.13]{Pearcy}.
\begin{problem}
Let $1<p<\infty$, $p\neq 2$. Let $\mu$ be the Lebesgue measure on $S(0,1)$. Does there exist a 1-summing operator $K$ on $L^{p}(\mu)$ and an operator $D\in L^{p}(\mu)$ that is diagonal with respect to an unconditional basis for $L^{p}(\mu)$ such that $M_{\mu}=D+K$?
\end{problem}
This problem also has an affirmative answer when $p=2$ \cite{Voiculescu}.

\end{document}